\newtheorem{lemma}{Lemma}
\newtheorem{conjecture}{Conjecture}
\newtheorem{openproblem}{Open Problem}
\theoremstyle{thmstyleone}%
\newtheorem{theorem}{Theorem}
\theoremstyle{thmstyletwo}%
\newtheorem{example}{Example}%
\theoremstyle{thmstylethree}%
\newtheorem{definition}{Definition}%
\begin{document}

\title[Multipartite Ramsey numbers of complete bipartite graphs]{Multipartite Ramsey numbers of complete bipartite graphs arising
from algebraic combinatorial structures}


\author[1]{\fnm{I Wayan Palton} \sur{Anuwiksa}}\email{paltonanuwiksa@students.itb.ac.id}

\author*[2,3]{\fnm{Rinovia} \sur{Simanjuntak}}\email{rino@itb.ac.id}

\author[2,3]{\fnm{Edy Tri} \sur{Baskoro}}\email{ebaskoro@itb.ac.id}

\affil[1]{\orgdiv{Doctoral Program of Mathematics, Faculty of Mathematics and Natural Sciences}, \orgname{Institut Teknologi Bandung}, \orgaddress{\country{Indonesia}}}

\affil[2]{\orgdiv{Combinatorial Mathematics Research Group}, \orgname{Institut Teknologi Bandung}, \orgaddress{\street{Jl Ganesa 10}, \city{Bandung}, \postcode{40132}, \country{Indonesia}}}

\affil[3]{\orgname{Center for Research Collaboration on Graph Theory and Combinatorics}, \orgaddress{\city{Bandung}, \country{Indonesia}}}


\abstract{In 2019, Perondi and Carmelo determined the set multipartite Ramsey number of particular complete bipartite graphs by establishing a relationship between the set multipartite Ramsey number, Hadamard matrices, and strongly regular graphs, which is a breakthrough in Ramsey theory. However, since Hadamard matrices of order not divisible by 4 do not exist, many open problems have arisen.\\ 

In this paper, we generalize Perondi and Carmelo's results by introducing the $[\alpha]$-Hadamard matrix that we conjecture exists for arbitrary order. 
Finally, we determine set and size multipartite Ramsey numbers for particular complete bipartite graphs.}

\keywords{set multipartite Ramsey number, size multipartite Ramsey number, strongly regular graph, $[\alpha]$-Hadamard matrix}



\maketitle

\section{Introduction}\label{Intro}

In 2004, Burger and Van Vuuren \cite{P1,P2} introduced the notion of set multipartite Ramsey number and size multipartite Ramsey number as variations of the classical Ramsey number. The extension to many colors is established in \cite{P3}, and the extension in a more general setting is presented in \cite{P7,P8}. For $c \geq 2,s \geq 1$, we denote by $K_{c \times s}$ the complete multipartite graph with $c$ partite sets, each of which contains $s$ vertices. 

\begin{definition}\label{definisi1} \cite{P7,P8}
Let $s,k$ be positive integers with $k\geq 2$ and $G_1,G_2,\ldots,G_k$ be simple graphs. The  \emph{set multipartite Ramsey number}, denoted by $M_s(G_1,G_2,\ldots,G_k)$, is the smallest positive integer $c$ such that any $k$-coloring of the edges of $K_{c\times s}$ contains a monochromatic copy of $G_i$ in color $i$ for some $i\in\{1,2,\ldots,k\}$.

The \emph{size multipartite Ramsey number}, denoted by $m_c(G_1,G_2,\ldots,G_k)$, is the smallest positive integer $s$ such that any $k$-coloring of the edges of $K_{c\times s}$ contains a monochromatic copy of $G_i$ in color $i$ for some $i\in\{1,2,\ldots,k\}$.

In the case of $G_1=G_2=\ldots= G_k=G$, the two aforementioned Ramsey numbers are abbreviated to $M_s(G;k)$ and $m_c(G;k)$, respectively.
\end{definition} 

Surprisingly, there is a strong relation among set and size multipartite Ramsey numbers, strongly regular graphs, and Hadamard matrices. In 2019, Perondi and Carmelo \cite{P7}  determined the set multipartite Ramsey number of bipartite graphs by establishing the relationship of set multipartite Ramsey number, Hadamard matrices, and strongly regular graphs. Their results are presented in Theorems \ref{A2} and \ref{A1}.

\begin{theorem} \cite{P7} \label{A2}
Suppose that there is a strongly regular graph with parameters $(4n-3, 2n-2, n-2, n-1)$ and there is a symmetric
Hadamard matrix of order $\zeta$. Then
 $M_\zeta(K_{2,\zeta(n-1)+1};2) \geq  4n-2$.
\end{theorem}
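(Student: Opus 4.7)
The plan is to prove the lower bound by constructing an explicit red/blue coloring of $K_{(4n-3)\times \zeta}$ that avoids a monochromatic $K_{2,\zeta(n-1)+1}$. I would label the partite sets by the vertex set of the given strongly regular graph $\Gamma$, and within each partite set index the $\zeta$ vertices by the rows (equivalently columns) of the symmetric Hadamard matrix $H$. For $i\neq j$, declare the color of the edge joining $(i,a)$ to $(j,b)$ to be governed by the $(a,b)$-entry of $H$ when $ij\in E(\Gamma)$, and by the $(a,b)$-entry of $-H$ when $ij\notin E(\Gamma)$, with $+1$ encoding red and $-1$ encoding blue. The symmetry of $H$ is essential here: it is precisely what ensures that the color assigned to $\{(i,a),(j,b)\}$ coincides with that assigned to $\{(j,b),(i,a)\}$, so the coloring is well defined.

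Next, I would count the common red-neighbors (and, by symmetry, blue-neighbors) of an arbitrary pair of distinct vertices $u=(i,a)$ and $u'=(i',a')$. The count splits into three cases according to whether $i=i'$, $i\sim i'$ in $\Gamma$, or $i\not\sim i'$ in $\Gamma$. Introducing the standard quantities $p,q,r,s$ that tally the number of indices $b$ on which $(H_{ab},H_{a'b})$ realizes each of the four $\pm 1$ sign patterns, each case expresses the number of common monochromatic neighbors as a linear combination of $p,q,r,s$ whose coefficients are read off directly from the SRG parameters $(4n-3,2n-2,n-2,n-1)$.

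The crux is then to invoke the row-orthogonality $HH^{T}=\zeta I$, which gives $p+s=q+r=\zeta/2$ whenever $a\neq a'$, together with the partition identity $p+q+r+s=\zeta$. In each of the three cases the linear combination collapses to $(n-1)\zeta$ minus a nonnegative quantity (either $p$ or $s$, both of which vanish in the degenerate subcase $i=i'$ thanks to orthogonality). Consequently every pair of vertices shares at most $(n-1)\zeta$ common neighbors in each color, which rules out a monochromatic $K_{2,\zeta(n-1)+1}$ in the coloring and therefore yields $M_\zeta(K_{2,\zeta(n-1)+1};2)\geq 4n-2$.

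The main obstacle I anticipate is bookkeeping rather than any deep idea: one has to tabulate correctly, in each of the three cases for the pair $(i,i')$, how many indices $j\notin\{i,i'\}$ fall into each of the four edge/non-edge configurations relative to $i$ and $i'$ in $\Gamma$, and to track how the $\pm$ signs from $H$ versus $-H$ interact with the red/blue convention. The delicate point is to observe that the SRG parameters are calibrated so that exactly one of the four coefficients in each linear combination drops from $(n-1)$ to $(n-2)$; once this alignment between the combinatorial tallies and the Hadamard quantities is laid out, and $HH^{T}=\zeta I$ is applied, the uniform bound $(n-1)\zeta$ falls out without further estimation.
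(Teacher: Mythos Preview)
Your construction and counting argument are exactly those the paper uses (it proves the generalized Theorem~\ref{Q12} and recovers this statement as the special case $\alpha=0$, $\theta=n-1$): color by $\psi((a,i)(b,j))=\pm h_{i,j}$ according to adjacency in the strongly regular graph, then bound the common monochromatic neighbourhood of any pair via the partition of Lemma~\ref{lemmasrg} combined with Hadamard orthogonality (Lemma~\ref{lemmahadamard} with $\alpha=0$). The only cosmetic difference is that the paper organizes the three cases by whether the SRG-coordinates agree and whether the Hadamard-coordinates agree, rather than by the $\Gamma$-relation between $i$ and $i'$ as you do; just make sure your cases $i\sim i'$ and $i\not\sim i'$ explicitly cover the subcase $a=a'$ (where $q=r=0$), since there orthogonality is not invoked but the bound $(n-1)\zeta$ still holds.
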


\begin{theorem} \cite{P7} \label{A1}
Suppose that there is a  strongly regular graph with parameters $(4n-3, 2n-2, n-2, n-1)$ and there is a  symmetric
Hadamard matrix of order $\zeta$ with $\zeta \geq 4n$. Then
 $M_\zeta(K_{2,\zeta(n-1)+1};2) =  4n-2$.
\end{theorem}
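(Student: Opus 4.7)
Theorem \ref{A2} already supplies the lower bound $M_\zeta(K_{2,\zeta(n-1)+1};2)\geq 4n-2$, so it suffices to establish the matching upper bound: every red/blue edge coloring of $K_{(4n-2)\times\zeta}$ must contain a monochromatic $K_{2,\zeta(n-1)+1}$. I plan to prove this by a second-moment counting argument.

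Fix an arbitrary part $P$ of $K_{(4n-2)\times\zeta}$ (of size $\zeta$), and suppose for contradiction that a 2-coloring admits no monochromatic $K_{2,\zeta(n-1)+1}$. For each of the $(4n-3)\zeta$ vertices $w\notin P$, encode the colors of the $\zeta$ edges from $w$ to $P$ as a sign vector $v_w\in\{\pm 1\}^{\zeta}$, using $+1$ for red. For each unordered pair $\{u_1,u_2\}\subseteq P$, let $R(u_1,u_2)$ and $B(u_1,u_2)$ denote the numbers of common red and blue neighbors of $u_1,u_2$ in $V\setminus P$; by hypothesis $R,B\leq \zeta(n-1)$ for every such pair.

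Using the identity $\mathbf{1}[v_w(u_1)=v_w(u_2)]=\tfrac{1}{2}(1+v_w(u_1)v_w(u_2))$ together with the standard expansion $\sum_{\{u_1,u_2\}\subseteq P}v_w(u_1)v_w(u_2)=\tfrac{1}{2}(S_w^{2}-\zeta)$, where $S_w=\sum_{u\in P}v_w(u)$, I would write the double sum $\sum_{\{u_1,u_2\}\subseteq P}(R+B)$ in closed form. Dropping the nonnegative contribution $\sum_{w\notin P}S_w^{2}$ yields the lower estimate
\[
\sum_{\{u_1,u_2\}\subseteq P}\bigl(R(u_1,u_2)+B(u_1,u_2)\bigr)\;\geq\;\frac{(4n-3)\zeta^{2}(\zeta-2)}{4},
\]
while the contradiction hypothesis directly gives $\sum (R+B)\leq 2\zeta(n-1)\binom{\zeta}{2}$. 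Comparing the two estimates reduces, after elementary algebra, to $\zeta\leq 4n-2$, contradicting $\zeta\geq 4n$.

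The argument is essentially mechanical once the correct viewpoint is chosen; the delicate point is the decision to restrict the double sum to pairs lying within a single fixed part $P$. Summing over all inter-part pairs simultaneously would require per-part sign sums for every part at once, and the naive inequality $S_w^{2}\geq 0$ would no longer be sharp enough to force the final inequality $\zeta\leq 4n-2$. Note that neither the strongly regular graph nor the symmetry of the Hadamard matrix is needed for the upper bound; those hypotheses are inherited solely to permit the application of Theorem \ref{A2}.
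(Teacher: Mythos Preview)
Your argument is correct. The paper itself does not reprove Theorem~\ref{A1} (it is quoted from \cite{P7}); the relevant upper-bound machinery in both \cite{P7} and the present paper is Lemma~\ref{A3}, which for $k=2$, $s=\zeta$, $c=4n-2$, $n_1=n_2=\zeta(n-1)+1$ reduces exactly to the inequality $\zeta>4n-2$.

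Your route and Lemma~\ref{A3} are the same second-moment (``cherry-counting'') idea, but organised differently. Lemma~\ref{A3} counts monochromatic paths of length~$2$ centred at every vertex of $K_{(4n-2)\times\zeta}$ and lower-bounds via the convexity inequality $\binom{d_r}{2}+\binom{d_b}{2}\ge 2\binom{(c-1)s/2}{2}$, then compares with the trivial upper bound $(n_1-1+n_2-1)\binom{cs}{2}$. You instead restrict to the $\binom{\zeta}{2}$ pairs inside one fixed part and use $S_w^{2}\ge 0$; unwinding, this is precisely the same convexity bound applied only to the degrees of each $w\notin P$ into $P$. Both computations collapse to $\zeta\le 4n-2$ under the no-$K_{2,\zeta(n-1)+1}$ hypothesis. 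What your localisation buys is a slightly cleaner bookkeeping (no need to worry about whether the two endpoints of the cherry lie in the same part or not), at the cost of exploiting less of the graph; Lemma~\ref{A3} is stated for general $k$ colours, which your sign-vector formulation does not immediately cover. Your closing remark that the strongly regular graph and the symmetry of $H$ are irrelevant to the upper bound is correct and matches the paper's use of those hypotheses solely through Theorem~\ref{A2}.
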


It is well-known that Hadamard matrices of order $\zeta\not\equiv 0\pmod 4$ do not exist, which means the conclusions of Theorems \ref{A2} and \ref{A1} only apply for $\zeta\equiv 0\pmod 4$ at best. This fact motivates us to introduce a generalization of the Hadamard matrix that we call the $[\alpha]$-Hadamard matrix and the $\alpha$-Hadamard matrix, defined in the following. 

\begin{definition}\label{defmatrix}
Let $\zeta\geq 1$ and $\alpha \in\{0,1,\ldots,\zeta\}$. $H$ is an \emph{$[\alpha]$-Hadamard matrix} (resp. \emph{$\alpha$-Hadamard matrix}) of order $\zeta$ if and only if $H$ is a square matrix of order $\zeta$ with entries $1$ or $-1$ and $\alpha$ is the upper bound (resp. maximum) of  
$\{\mid HH^t(i,j)\mid\,\mid\,1\leq i \neq j\leq \zeta\}$.
\end{definition}

Utilizing the $[\alpha]$-Hadamard matrices, we present our main results in the following two theorems. 


\begin{theorem}\label{Q12} 
Let $G$ be a strongly regular graph with parameters $(n,k,\lambda, \mu)$, $H$ be a symmetric $[\alpha]$-Hadamard matrix of order $\zeta\geq 2$, and $\theta=\max\{k/2,\lambda,\mu,(n-k-1)/2,$ $n-2-2k+\mu,n-2k+\lambda\}$. Then 
$$n+1\leq M_\zeta(K_{2,\theta (\zeta+\alpha)+1};2) \text{ and } \zeta+1\leq m_{n}(K_{2,\theta (\zeta+\alpha)+1};2).$$
Furthermore,
\begin{enumerate}
    \item[\rm \textit{$i).$}] If $\zeta$ or $\left\lceil \frac{4\theta\alpha+1}{\zeta}  \right\rceil-1$ are even, then
$n+1\leq M_\zeta(K_{2,\theta (\zeta+\alpha)+1};2)\leq 4\theta +2 +\left\lceil \frac{4\theta\alpha+1}{\zeta}  \right\rceil$.
\item[\rm \textit{$ii).$}] If $n-1$ or $\left\lceil \frac{4n \theta(\zeta+\alpha)}{(n -1)^2}+\frac{2}{n-1}  \right\rceil$ are even, then $\zeta+1\leq m_{n}(K_{2,\theta (\zeta+\alpha)+1};2)\leq \left\lceil \frac{4n \theta(\zeta+\alpha)}{(n-1)^2}+\frac{2}{n-1}  \right\rceil$.
\end{enumerate}
\end{theorem}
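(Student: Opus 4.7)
The plan is to prove the lower bounds by exhibiting an explicit $2$-coloring of $K_{n\times\zeta}$ that avoids a monochromatic $K_{2,\theta(\zeta+\alpha)+1}$, and to prove the upper bounds via a standard cherry-counting argument coupled with a convexity step kept tight by the parity hypotheses. The bookkeeping in the lower-bound case analysis is the main obstacle.

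For the construction I would identify $V(K_{n\times\zeta})$ with $V(G)\times[\zeta]$, with $i$-th partite class $\{i\}\times[\zeta]$, and define a sign $\sigma(i,i')=+1$ if $i\sim_G i'$ and $\sigma(i,i')=-1$ if $i\neq i'$ and $i\not\sim_G i'$. The edge $\{(i_1,j_1),(i_2,j_2)\}$ is colored red when $\sigma(i_1,i_2)H(j_1,j_2)=+1$ and blue otherwise. For a pair of vertices $u_1=(i_1,j_1),u_2=(i_2,j_2)$ and $s_1,s_2\in\{\pm1\}$, put $a_{s_1s_2}=|\{j\in[\zeta]:H(j_1,j)=s_1,\,H(j_2,j)=s_2\}|$; these satisfy $\sum a_{s_1s_2}=\zeta$, and the $[\alpha]$-Hadamard hypothesis gives $|a_{++}-a_{+-}-a_{-+}+a_{--}|\leq\alpha$ whenever $j_1\neq j_2$. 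A direct count expresses the number of common red neighbors $N_R(u_1,u_2)$ as the non-negative integer combination $\sum c_{s_1s_2}a_{s_1s_2}$ whose coefficient vector $(c_{++},c_{+-},c_{-+},c_{--})$ is $(k,0,0,n-1-k)$ if $u_1,u_2$ share a partite class, $(\lambda,k-1-\lambda,k-1-\lambda,n-2k+\lambda)$ if the classes are $G$-adjacent, and $(\mu,k-\mu,k-\mu,n-2-2k+\mu)$ if they are $G$-non-adjacent; the formula for $N_B$ is obtained by the involution $(++)\leftrightarrow(--)$, $(+-)\leftrightarrow(-+)$.

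The heart of the lower bound is bounding $N_R$ over the polytope defined by the $(a_{s_1s_2})$. When $j_1=j_2$ one has $a_{+-}=a_{-+}=0$ and $a_{++}+a_{--}=\zeta$, so $N_R\leq\max(c_{++},c_{--})\zeta\leq\theta\zeta$ because $\lambda$, $\mu$, $n-2k+\lambda$ and $n-2-2k+\mu$ are all bounded by $\theta$. When $j_1\neq j_2$, every feasible LP vertex has exactly two non-zero coordinates of sizes $(\zeta+\alpha)/2$ and $(\zeta-\alpha)/2$, and at each such vertex
\begin{equation*}
N_R=\tfrac12(c_1+c_2)\zeta+\tfrac12|c_1-c_2|\alpha\leq\tfrac12(c_1+c_2)(\zeta+\alpha),
\end{equation*}
the inequality following from $c_1,c_2\geq 0$. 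A brief inspection shows that the values of $(c_1+c_2)/2$ produced this way are $k/2$, $(n-k-1)/2$, $(k-1)/2$ and $(n-k-2)/2$, each at most $\theta$ via the definition (the last two are bounded by $k/2$ and $(n-k-1)/2$ respectively). Hence $N_R\leq\theta(\zeta+\alpha)$ in every case, and the symmetric argument yields the same bound for $N_B$. Therefore $K_{n\times\zeta}$ has no monochromatic $K_{2,\theta(\zeta+\alpha)+1}$, giving both lower bounds at once.

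For the upper bounds I would apply the standard double count
\begin{equation*}
\sum_{v}\left[\binom{d_R(v)}{2}+\binom{d_B(v)}{2}\right]=\sum_{\{u_1,u_2\}}\bigl(N_R(u_1,u_2)+N_B(u_1,u_2)\bigr),
\end{equation*}
in which the right-hand side is at most $2\theta(\zeta+\alpha)\binom{cs}{2}$ under the absence of a monochromatic $K_{2,\theta(\zeta+\alpha)+1}$, while convexity of $\binom{\cdot}{2}$ on integers forces the left-hand side to be at least $cs\cdot(c-1)s\bigl((c-1)s-2\bigr)/4$ whenever $(c-1)s$ is even---which is exactly what the parity hypotheses in (i) and (ii) secure at the critical value of the parameter. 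Rearranging gives the quadratic inequality $(c-1)s\bigl((c-1)s-2\bigr)\leq 4\theta(\zeta+\alpha)(cs-1)$; evaluating it at $(c,s)=(4\theta+2+\lceil(4\theta\alpha+1)/\zeta\rceil,\zeta)$ for (i), and at $(c,s)=(n,\lceil 4n\theta(\zeta+\alpha)/(n-1)^2+2/(n-1)\rceil)$ for (ii), the defining ceiling inequality supplies a surplus of at least $\zeta^2-1$ in case (i) and at least $4\theta(\zeta+\alpha)$ in case (ii), producing the contradiction that forces the required monochromatic $K_{2,\theta(\zeta+\alpha)+1}$.
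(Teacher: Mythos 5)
Your proposal is correct, and it overlaps with the paper's proof on the lower bound while diverging on the upper bound. For the lower bound you use exactly the paper's coloring of $K_{n\times\zeta}$ (signs from adjacency in $G$ times entries of $H$) and the same two partitions (of $V(G)\setminus\{a,b\}$ into the four neighborhood classes, and of the column indices into the four sign classes); the difference is organizational: you bound the common-neighbor count by maximizing the linear form $\sum c_{s_1s_2}a_{s_1s_2}$ over the polytope cut out by $\sum a_{s_1s_2}=\zeta$ and $\lvert a_{++}+a_{--}-a_{+-}-a_{-+}\rvert\le\alpha$, and observe that at the feasible vertices the relevant averages are $k/2$, $(k-1)/2$, $(n-k-1)/2$, $(n-k-2)/2$, each at most $\theta$. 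This is slightly sharper than the paper's case analysis, which in the case $i\neq j$, $a\neq b$ bounds the count by $\max\{\lvert G_1\rvert,\lvert G_2\rvert,\lvert G_3\rvert,\lvert G_4\rvert\}\zeta$ and therefore implicitly also needs $k-\lambda-1\le\theta$ and $k-\mu\le\theta$ (true, but not argued there); your route only ever needs $\lambda,\mu,\overline{\lambda},\overline{\mu},k/2,\overline{k}/2\le\theta$, which is the definition of $\theta$. For the upper bounds the paper simply invokes two quoted results (its Theorems on $M_m(K_{2,n};k)$ and $m_c(K_{2,n_1},\ldots)$ from Perondi--Carmelo), whereas you re-derive the needed $k=2$ special cases from scratch by the monochromatic-cherry double count plus convexity, with the parity hypotheses guaranteeing that $(c-1)s$ is even at the critical parameters; your surplus computations ($\zeta^2-1$ in case (i), $4\theta(\zeta+\alpha)$ in case (ii)) check out, so this part is a self-contained replacement for the citations. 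Two cosmetic points: your claim that every LP vertex has exactly two non-zero coordinates fails only in the degenerate case $\alpha=\zeta$ (where a single coordinate equal to $\zeta$ is feasible), but the bound $\theta(\zeta+\alpha)$ still holds trivially there; and strictly one should note, as the paper does via its lemma on $\theta$, that $\theta$ (hence $\theta(\zeta+\alpha)$) is an integer so that $K_{2,\theta(\zeta+\alpha)+1}$ is well defined.
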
 

\begin{theorem} \label{A11}
Suppose that there exist a strongly regular graph with parameters $(4n-3, 2n-2, n-2, n-1)$ and a symmetric
$[\alpha]$-Hadamard matrix of even order $\zeta$ with $\zeta > (\sqrt{2}+1)(2n-1)(4\alpha n-4\alpha+1)$. Then
$$M_\zeta(K_{2,(\zeta+\alpha)(n-1)+1};2) = 4n-2.$$
\end{theorem}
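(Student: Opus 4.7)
The plan is to prove the lower bound by direct application of Theorem \ref{Q12} and the upper bound by a double-counting argument exploiting the hypothesis on $\zeta$. For the lower bound, I verify that the parameter $\theta$ of Theorem \ref{Q12} equals $n-1$ for the strongly regular graph with parameters $(4n-3,2n-2,n-2,n-1)$: substituting $n'=4n-3$, $k=2n-2$, $\lambda=n-2$, $\mu=n-1$ into the six expressions inside the max, each is at most $n-1$, with several achieving equality. Theorem \ref{Q12} then gives $M_\zeta(K_{2,(\zeta+\alpha)(n-1)+1};2) \geq (4n-3)+1 = 4n-2$.

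For the upper bound, I argue by contradiction: suppose some $2$-coloring of $K_{(4n-2)\times\zeta}$ contains no monochromatic copy of $K_{2,(\zeta+\alpha)(n-1)+1}$. Then for every pair of distinct vertices $u_1, u_2$ in a common partite set $P$ and every color $\chi \in \{R,B\}$, $|N_\chi(u_1) \cap N_\chi(u_2)| \leq (\zeta+\alpha)(n-1)$. Writing $d_P^\chi(w) = |N_\chi(w) \cap P|$, I double-count the monochromatic cherries with tips in a common partite set via
$$\sum_{P}\sum_{\chi}\sum_{w\notin P} \binom{d_P^\chi(w)}{2} = \sum_{P}\sum_{\chi}\sum_{\{u_1,u_2\}\subset P}|N_\chi(u_1)\cap N_\chi(u_2)|.$$
The right-hand side is bounded above by $2(4n-2)\binom{\zeta}{2}(\zeta+\alpha)(n-1)$, while the left is bounded below via convexity of $\binom{\cdot}{2}$ applied to each $w$ together with the constraint $d_P^R(w)+d_P^B(w)=\zeta$; the even-$\zeta$ hypothesis ensures that the minimum value $d_P^R(w)=d_P^B(w)=\zeta/2$ is attained at integers without rounding loss.

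Combining the two estimates and simplifying yields a quadratic inequality of the form
$$\zeta^2 - 2(2n-1)(4\alpha n - 4\alpha+1)\zeta + 4(n-1)\alpha \leq 0.$$
Using the identity $(\sqrt{2}+1)^2 = 2(\sqrt{2}+1)+1$, the hypothesis $\zeta > (\sqrt{2}+1)(2n-1)(4\alpha n-4\alpha+1)$ forces the left-hand side to be strictly positive, producing a contradiction. Hence $M_\zeta(K_{2,(\zeta+\alpha)(n-1)+1};2)\leq 4n-2$, which together with the lower bound gives equality. The main technical obstacle will be carrying out the algebraic bookkeeping in the double-counting estimate so that the coefficients of the quadratic inequality match exactly the form above, and specifically so that the silver-ratio threshold $\sqrt{2}+1$ emerges as the sufficient constant; a secondary subtlety is verifying that the parity hypothesis on $\zeta$ removes integrality slack at the point where the convexity bound is applied.
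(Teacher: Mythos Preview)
Your lower-bound argument coincides with the paper's. For the upper bound, however, your route differs: the paper simply invokes Lemma~\ref{A3}, which double-counts \emph{all} monochromatic cherries (tips ranging over arbitrary vertex pairs), and then carries out a lengthy algebraic check that the resulting quadratic in $c$ is positive at $c=4n-2$ under the hypothesis on $\zeta$. You instead restrict to cherries whose tips lie in a \emph{single} partite class, applying convexity to each block $d_P^R(w)+d_P^B(w)=\zeta$ separately rather than to the total colour degrees of $w$.

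This refinement is legitimate and in fact \emph{sharper} than Lemma~\ref{A3}. Carrying your count through, the condition ``lower estimate $>$ upper estimate'' reads $(4n-3)\zeta(\zeta-2) > 4(n-1)(\zeta-1)(\zeta+\alpha)$, which simplifies to
\[
\zeta^{2}-\bigl[(4n-2)+(4n-4)\alpha\bigr]\zeta+(4n-4)\alpha>0,
\]
and this holds already once $\zeta\ge (4n-2)+(4n-4)\alpha$, a bound linear in $n$ and $\alpha$. The quadratic you wrote, $\zeta^{2}-2(2n-1)(4\alpha n-4\alpha+1)\zeta+4(n-1)\alpha\le 0$, is \emph{not} what your double count produces; your stated linear coefficient is too large by roughly a factor of $2n$. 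The silver-ratio threshold $\sqrt{2}+1$ does not emerge from your per-partite-set count at all---it is an artefact of the coarser Lemma~\ref{A3} bound the paper uses. Since the theorem's hypothesis is far stronger than what your method actually needs, the proof of the stated result still goes through; but you should correct the displayed quadratic and note that your argument in fact yields the conclusion under a substantially weaker assumption on $\zeta$.
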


Since we conjecture that a symmetric $[\alpha]$-Hadamard matrix of order $\zeta$ exists for every $\zeta$, the conclusions in our results apply for all $\zeta$ at best, that can be viewed as a generalization of Theorems \ref{A2} and \ref{A1}. 

The rest of the paper is organized as follows. In Section \ref{sec:Hadamard}, we provide constructions of an $[\alpha]$-Hadamard matrix and conjecture the existence of $[\alpha]$-Hadamard matrices of all orders. The proof and examples of the application of Theorem \ref{Q12} are presented in Section \ref{sec:Ramsey}, and those of Theorem \ref{A11} are given in Section \ref{sec:exact}. Finally, we conclude by proposing some open problems in Section \ref{OPD}.

\section{$[\alpha]$-Hadamard and $\alpha$-Hadamard Matrices} \label{sec:Hadamard}


It is easy to see that a $0$-Hadamard matrix is a Hadamard matrix. Moreover, if $H$ is a matrix of order $m$ whose entries are $1$ or $-1$, then it is also clear that there exists an $\alpha$ such that $H$ is an $\alpha$-Hadamard matrix. Another important observation is that if $H$ is an $\alpha$-Hadamard matrix, it is also an $[\alpha]$-Hadamard matrix. 

An obvious observation for an $[\alpha]$-Hadamard (resp. an $\alpha$-Hadamard) matrix $H$ is that if 
\begin{enumerate}
    \item the rows of H are permuted, or 
    \item the columns of H are permuted, or 
    \item any row of $H$ is multiplied  by -1, or
    \item any column of $H$ is multiplied by -1, then
\end{enumerate}
the resulting matrix is still $[\alpha]$-Hadamard (resp. an $\alpha$-Hadamard).

In the following theorem, we provide a construction of an $[\alpha]$-Hadamard (resp. a symmetric $\alpha$-Hadamard matrices) from a Hadamard matrix (resp. a symmetric Hadamard matrix).

\begin{theorem}\label{H1}
Let $\alpha$ and $\zeta$ be non-negative integers. If $ \alpha\leq \zeta/2$ and there exists a Hadamard matrix (resp. a symmetric Hadamard matrix) of order $\zeta$, then there exists an $[\alpha]$-Hadamard matrix (resp. a symmetric $\alpha$-Hadamard matrix) of order $\zeta-\alpha$.
\end{theorem}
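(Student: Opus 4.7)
The plan is to construct the target matrix directly by deleting $\alpha$ rows and $\alpha$ columns of the given Hadamard matrix $H$ of order $\zeta$, exploiting the identity $HH^t=\zeta I$, which gives $\sum_{k=1}^{\zeta}h_{ik}h_{jk}=0$ whenever $i\neq j$.

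First I would handle the non-symmetric case. Choose any $\alpha$-subsets $R,C\subseteq\{1,\ldots,\zeta\}$ and let $H'$ be the submatrix of order $\zeta-\alpha$ obtained by deleting the rows indexed by $R$ and the columns indexed by $C$. For any two distinct kept row indices $i,j$,
\[
H'H'^t(i,j)=\sum_{k\notin C}h_{ik}h_{jk}=-\sum_{k\in C}h_{ik}h_{jk},
\]
whose absolute value is at most $|C|=\alpha$. This shows that $H'$ is an $[\alpha]$-Hadamard matrix of order $\zeta-\alpha$.

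For the symmetric case I would take $R=C=I$ for an $\alpha$-subset $I$ chosen with care; the resulting matrix $H'$ is automatically symmetric and $[\alpha]$-Hadamard by the computation above. The remaining task is to select $I$ so that the bound $\alpha$ is \emph{attained} by some off-diagonal entry of $H'H'^t$, that is, to produce kept indices $i\neq j$ and a set $I$ disjoint from $\{i,j\}$ on which the product $h_{ik}h_{jk}$ is constant in sign. By Hadamard orthogonality, for any distinct $i,j$ the sets $A_{ij}=\{k:h_{ik}=h_{jk}\}$ and $D_{ij}=\{k:h_{ik}=-h_{jk}\}$ each have cardinality $\zeta/2$. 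Using the symmetry $h_{ji}=h_{ij}$, one checks that $i\in A_{ij}$ if and only if $h_{ii}=h_{ij}$, and $j\in A_{ij}$ if and only if $h_{jj}=h_{ij}$. Hence choosing $i\neq j$ with $h_{ii}=h_{jj}$ forces $i$ and $j$ onto the same side of the partition $\{A_{ij},D_{ij}\}$, so the opposite side has size $\zeta/2\geq\alpha$ and is disjoint from $\{i,j\}$; any $\alpha$-subset $I$ lying inside it gives $|H'H'^t(i,j)|=\alpha$. A pair with $h_{ii}=h_{jj}$ exists by pigeonhole on the $\pm 1$ diagonal whenever $\zeta\geq 3$, and the orders $\zeta\leq 2$ can be dealt with by inspection.

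The main obstacle I anticipate is the extremal range $\alpha=\zeta/2$: when $\alpha<\zeta/2$ there is always at least one element of slack in either $A_{ij}\setminus\{i,j\}$ or $D_{ij}\setminus\{i,j\}$ for every pair $i\neq j$, so essentially any choice works, but at $\alpha=\zeta/2$ the set hosting $I$ must lie entirely outside $\{i,j\}$, and this is exactly what the diagonal-matching selection of $i,j$ guarantees. Everything else is bookkeeping of the deletion argument.
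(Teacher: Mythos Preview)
Your proof is correct and follows the same deletion argument as the paper: remove $\alpha$ rows and $\alpha$ columns and bound the truncated inner products via $HH^{t}=\zeta I$. The only variation is in the symmetric case, where the paper first normalizes the first row to all $1$'s and deletes $\alpha$ column indices on which row $2$ equals $-1$ (so that $\lvert\langle r_1'',r_2''\rangle\rvert=\alpha$), whereas you instead use pigeonhole on the diagonal to select the two witness rows; this is a cosmetic difference, and your version handles the boundary $\alpha=\zeta/2$ a bit more transparently.
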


\begin{proof}
Let $H$ be a Hadamard matrix of order $\zeta$. For $1\leq i\leq \zeta$, let $r_i$ be the $i^{th}$ row of $H$. Then $\left<r_i,r_j\right>=0,\,\forall i\neq j\in\{1,2,\ldots ,\zeta\}$. Let $A_1, A_2\subset \{0,1,\ldots,\zeta\}$, where $\lvert A_1\rvert=\lvert A_2\rvert=\alpha \leq \zeta/2$. To prove the first part of the theorem, we construct a matrix $H'$ by deleting the $i^{th}$ row and the $j^{th}$ column of $H$, for any $i\in A_1$ and $j \in A_2$. If $r_i'$ is the $i^{th}$ row of $H'$, then $\lvert \left<r_i',r_j'\right>\rvert \leq \alpha$, for all $i\neq j\in\{1,2,\ldots,\zeta-\alpha\}$. We conclude that $H'$ is an $[\alpha]$-Hadamard matrix of order $\zeta-\alpha$.

For the second part of the theorem, assume that $H$ is a symmetric Hadamard matrix. Since multiplying any row and column of $H$ by $-1$ does not change the property of $\lvert \left<r_i,r_j\right>\rvert=0$, for all $i\neq j\in\{1,2,\ldots,\zeta\}$, then we may assume that the first row of $H$ is $[1\, 1\, \ldots \, 1]$. Since $\lvert \left<r_1,r_2\right>\rvert=0$,
we conclude that the number of entries $1$ and $-1$ in the second row is equal, which is $\zeta/2$. Now, let $C\subseteq\{j\,\mid \,H(2,j)=-1\}$ where $\lvert C\rvert =\alpha$. Construct a matrix $H''$ by deleting the $i^{th}$ row and the $i^{th}$ column of $H$, for every $i\in C$. Let $r_i''$ be the $i^{th}$ row of $H''$, then $\lvert \left<r_i'',r_j''\right>\rvert \leq \alpha$ for all $i\neq j\in\{1,2,\ldots,\zeta-\alpha\}$, and $\lvert\left<r_1'',r_2''\right>\rvert= \alpha$. Therefore, $H''$ is a symmetric $\alpha$-Hadamard matrix of order $\zeta-\alpha$. 
\end{proof}

\begin{figure}[h]
\begin{multicols}{2}
$$H_1=\left[\begin{array}{cccccccc}
     1&1&1&1&1&1&1&1  \\
     1&-1&1&-1&1&-1&1&-1\\
     1&1&-1&-1&1&1&-1&-1\\
     1&-1&-1&1&1&-1&-1&1\\
     1&1&1&1&-1&-1&-1&-1  \\
     1&-1&1&-1&-1&1&-1&1\\
     1&1&-1&-1&-1&-1&1&1\\
     1&-1&-1&1&-1&1&1&-1
\end{array}\right]$$

$$H_2=\left[\begin{array}{cccccc}
     1&1&1&1&1&1 \\
     1&-1&1&-1&1&1\\
     1&1&-1&-1&1&-1\\
     1&-1&-1&1&1&-1\\
     1&1&1&1&-1&-1  \\
     1&1&-1&-1&-1&1
\end{array}\right]$$
\end{multicols}
\caption{A symmetric Hadamard matrix of order 8, $H_1$, and a symmetric $2$-Hadamard matrix of order 6, $H_2$, constructed by deleting the $6^{th},8^{th}$ rows and the $6^{th},8^{th}$ columns of $H_1$.}
\end{figure}



A well-known conjecture stated the existence of a (symmetric) Hadamard matrix of order $\zeta\equiv 0\pmod 4$. If this conjecture is true, then by Theorem $\ref{H1}$, we obtain a $[1]$-Hadamard matrix and a symmetric $1$-Hadamard matrix of order $p\equiv 3\pmod 4$, a $[2]$-Hadamard matrix and a symmetric $2$-Hadamard matrix of order $q\equiv 2\pmod 4$, and a $[3]$-Hadamard matrix and a symmetric $3$-Hadamard matrix of order $r\equiv 1\pmod 4$. Since a $[1]$-Hadamard matrix and a $[2]$-Hadamard matrix are also a $[3]$-Hadamard matrix, and an $\alpha$-Hadamard matrix is also an $[\alpha]$-Hadamard matrix, we propose the following conjecture.

\begin{conjecture}\label{C11}
For any $\alpha\geq 3$, there exist (symmetric) $[\alpha]$-Hadamard matrices of all orders.
\end{conjecture}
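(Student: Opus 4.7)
My plan is to derive Conjecture \ref{C11} as a corollary of Theorem \ref{H1} combined with the (standard) Hadamard conjecture. Given a target order $m$, I would pick $\delta \in \{0,1,2,3\}$ so that $m + \delta \equiv 0 \pmod 4$ and let $H$ be a (symmetric) Hadamard matrix of order $m + \delta$. Applying Theorem \ref{H1} with $\zeta = m + \delta$ and $\alpha = \delta$ then yields a (symmetric) $[\delta]$-Hadamard matrix of order $m$. Since every $[\delta]$-Hadamard matrix is automatically an $[\alpha]$-Hadamard matrix for all $\alpha \geq \delta$, and here $\delta \leq 3 \leq \alpha$, the construction produces the required matrix for every $\alpha \geq 3$ and every $m$.

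The hypothesis $\alpha \leq \zeta/2$ of Theorem \ref{H1} reduces in this setting to $\delta \leq (m+\delta)/2$, i.e.\ $m \geq \delta$, so only the tiny orders $m \in \{1,2\}$ would require separate inspection; any $\pm 1$ matrix of such order is trivially $[\alpha]$-Hadamard when $\alpha \geq 3$. For every $m \geq 3$ the four residue classes modulo $4$ are covered uniformly by the choice $\delta \equiv -m \pmod 4$.

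The main obstacle is that this argument is only conditional: it leans on the Hadamard conjecture (and its symmetric refinement), both of which remain open. For an \emph{unconditional} proof one would need to either resolve the Hadamard conjecture itself or build $[\alpha]$-Hadamard matrices directly at orders where no nearby Hadamard matrix is known. Because $\alpha \geq 3$ is a considerably weaker demand than the orthogonality constraint $\alpha = 0$, direct construction appears the more realistic route. Possible tactics include circulant or Williamson-style templates tolerating a few non-zero off-diagonal entries of $HH^t$, and conference-matrix-based constructions: for instance, replacing the zero diagonal of a symmetric conference matrix of order $n \equiv 2 \pmod 4$ by $\pm 1$ values produces a symmetric $2$-Hadamard matrix, hence a symmetric $[\alpha]$-Hadamard matrix for every $\alpha \geq 2$. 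The symmetric version of the conjecture is the harder one, since the existence of symmetric Hadamard matrices at every multiple of $4$ is itself a separate open problem, and a genuine proof would likely require direct constructions rooted in strongly regular graphs or association schemes.
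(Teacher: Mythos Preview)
This statement is labelled a \emph{conjecture} in the paper, and the paper does not prove it. What the paper does is motivate it: immediately before stating Conjecture~\ref{C11}, the authors observe that if the (symmetric) Hadamard conjecture holds, then Theorem~\ref{H1} yields a $[1]$-, $[2]$-, or $[3]$-Hadamard matrix of every order according to the residue class modulo~$4$, and hence a $[\alpha]$-Hadamard matrix for every $\alpha\geq 3$. This is exactly your conditional argument, down to the use of Theorem~\ref{H1} and the monotonicity of the $[\alpha]$-property in~$\alpha$.

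Your proposal is therefore not wrong, but it is not a proof of the conjecture either, and you correctly flag this yourself: the argument is conditional on the Hadamard conjecture (and its symmetric variant), which remains open. The paper makes the same point, remarking that Conjecture~\ref{C11} ``might be true without the existence of Hadamard matrices'' and then offering Theorems~\ref{1hadamard} and~\ref{4hadamard} as examples of direct constructions at certain orders. Your suggested tactics for an unconditional attack (conference-matrix modifications, circulant templates) are reasonable and in the same spirit, but neither you nor the paper carries them through to a full proof. In short: your reasoning matches the paper's own motivation for the conjecture, and the genuine gap you identify---the reliance on an open problem---is precisely why the statement remains a conjecture.
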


Notice that Conjecture \ref{C11} might be true without the existence of Hadamard matrices. An autonomous construction of $[\alpha]$-Hadamard matrix can be introduced by considering that $\lvert HH^t(i,j)\rvert$ might be distinct for some $i$ and $j$. In the following two theorems, we provide constructions of $1$-Hadamard and $4$-Hadamard matrices of particular order inspired by Paley's construction.

\begin{theorem}\label{1hadamard}
Let $p$ be a prime number and $\alpha>0$. If $p^{\alpha}\equiv 3\pmod 4$, then there exists a $1$-Hadamard matrix of order $p^{\alpha}$.
\end{theorem}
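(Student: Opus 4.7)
The plan is to adapt Paley's construction to the finite field $\mathbb{F}_q$ of order $q=p^{\alpha}$. Let $\chi:\mathbb{F}_q\to\{-1,0,1\}$ denote the quadratic residue character (with $\chi(0)=0$), and fix an enumeration $a_1,\ldots,a_q$ of the elements of $\mathbb{F}_q$. Define the Jacobsthal matrix $Q$ by $Q(i,j)=\chi(a_i-a_j)$. Because $q\equiv 3\pmod 4$ forces $\chi(-1)=-1$, the matrix $Q$ is skew in the sense that $Q^T=-Q$. A standard character-sum computation then yields the Paley identity $QQ^T=qI-J$, where $J$ is the all-ones matrix; the off-diagonal entry $\sum_{k}\chi(a_i-a_k)\chi(a_j-a_k)$ is evaluated by the substitution $u=(a_i-a_k)/(a_j-a_k)$, which bijects the admissible $k$'s with $\mathbb{F}_q\setminus\{0,1\}$ and reduces the sum to $\sum_{u\in\mathbb{F}_q\setminus\{0,1\}}\chi(u)=-1$.

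Next I would set $H=Q+I$. Since $\chi(0)=0$, the diagonal of $Q$ vanishes, so $H$ has $1$'s on the diagonal and $\pm1$ entries off the diagonal; thus $H$ is a $\pm 1$ matrix of order $q=p^{\alpha}$. Expanding
\[
HH^T=(Q+I)(Q^T+I)=QQ^T+Q+Q^T+I,
\]
and invoking $Q^T=-Q$ together with $QQ^T=qI-J$, I obtain $HH^T=(q+1)I-J$. In particular every off-diagonal entry of $HH^T$ equals $-1$, so
\[
\max\bigl\{\,|HH^T(i,j)|\ :\ 1\leq i\neq j\leq q\,\bigr\}=1.
\]
By Definition \ref{defmatrix}, $H$ is a $1$-Hadamard matrix of order $p^{\alpha}$, which completes the argument.

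The only genuine ingredient beyond direct algebra is the Paley identity $QQ^T=qI-J$, which is completely classical, so I anticipate no real obstacle. A small point worth highlighting is the choice of sign: the additive correction $+I$ (rather than $-I$) is precisely what converts the zero diagonal of $Q$ into the $+1$ diagonal required by Definition \ref{defmatrix}, while the skew-symmetry of $Q$ is what cancels the cross term $Q+Q^T$ in the expansion above and leaves a clean constant off-diagonal pattern in $HH^T$.
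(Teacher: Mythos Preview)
Your proposal is correct and follows essentially the same route as the paper: both set $H=I+Q$ with $Q$ the Jacobsthal matrix over $\mathbb{F}_{p^{\alpha}}$, use the skew property $Q^{T}=-Q$ coming from $\chi(-1)=-1$, and conclude $HH^{T}=(q+1)I-J$. The only cosmetic difference is that the paper obtains $QQ^{T}=qI-J$ by embedding $Q$ into the Paley conference matrix $C$ and reading off the bottom-right block of $CC^{T}=\zeta I_{\zeta+1}$, whereas you invoke the standard character-sum identity directly; the two derivations are equivalent.
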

\begin{proof}
Label the elements of $GF(p^{\alpha})$ as $a_0,a_1,a_2,\ldots$ in a particular order. Let $Q=(q_{i,j})$ be a matrix of order $\zeta=p^{\alpha}$ whose entries are given by $q_{ij}=\chi(a_i-a_j)$ where $\chi$ is the quadratic character on $GF(p^{\alpha})$. That is,
$$\chi(b)=\begin{cases}
 0,&\text{if }b=0\\
 +1,&\text{if }b \text{ is a non-zero perfect square in }GF(p^{\alpha})\\
 -1,&\text{if }b \text{ is not a perfect square in }GF(p^{\alpha})
\end{cases}$$
Form the $(\zeta+1)\times (\zeta+1)$ matrix $C=\left[
\begin{array}{cc}
0&\textbf{1}^t\\
-\textbf{1}&Q
\end{array}
\right]$ and let $H=I_\zeta+Q$. This results in $C=-C^t$ (anti-symmetric). Additionally, $C$ has all $0$'s along the diagonal and $\pm1$ elsewhere, and so $CC^t=\zeta I_{\zeta+1}$ (conference matrix). Therefore, $C$ an anti-symmetric conference matrix. 

Note that $q_{ji}=\chi((-1)(a_i-a_j))=\chi(-1)\chi(a_i-a_j)=(-1)\chi(a_i-a_j)=-q_{i,j}$, and so $Q^t=-Q$, $H^t\neq H$, and $H^t\neq -H$. 

We shall show that $H$ is a $1$-Hadamard matrix. First, note that
$$\zeta I_{\zeta+1}=CC^t=\left[
\begin{array}{cc}
0&\textbf{1}^t\\
-\textbf{1}&Q
\end{array}
\right]
\left[
\begin{array}{cc}
0&\textbf{1}^t\\
-\textbf{1}&Q
\end{array}
\right]^t
=\left[
\begin{array}{cc}
\zeta-1&(Q\textbf{1})^t\\
Q\textbf{1}&J_{\zeta}+QQ^t
\end{array}
\right],$$
where $J_{\zeta}$ is a matrix of order $\zeta$ with entry of all $1$. Thus,  $J_{\zeta}+QQ^t=\zeta I_\zeta$, and this results in
$$HH^t=(I_\zeta+Q)(I_\zeta+Q^t)=I_\zeta+QQ^t=(\zeta+1)I_\zeta-J_\zeta.$$
Therefore, $HH^t$ has all $\zeta$'s along the diagonal and $-1$ elsewhere. 
\end{proof}

\begin{theorem}\label{4hadamard}
Let $p$ be a prime number and $\alpha>0$. If $p^{\alpha}\equiv 1\pmod 4$, then there exists a $4$-Hadamard matrix of order $2p^{\alpha}$.
\end{theorem}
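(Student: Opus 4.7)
The plan is to adapt the Paley-style construction used in the proof of Theorem \ref{1hadamard} to the case $q := p^{\alpha} \equiv 1 \pmod{4}$. In that regime $\chi(-1) = 1$, so the Paley matrix $Q = (\chi(a_i - a_j))$ built from the quadratic character on $GF(q)$ is \emph{symmetric} rather than anti-symmetric, and the standard identity becomes $Q^{2} = qI - J$ (instead of $J - qI$ as in the anti-symmetric case). The extra $-J$ now obstructs $I + Q$ from being Hadamard on its own, so the strategy is to double the order by embedding $Q$ and $I$ inside a $2\times 2$ block pattern engineered so that the leftover $J$ contributions cancel in the Gram product, leaving an off-diagonal residue of absolute value at most $4$.

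First, I would enumerate $GF(q) = \{a_0,\ldots,a_{q-1}\}$ and form $Q = (q_{ij})$ with $q_{ij} = \chi(a_i - a_j)$. By the same character-sum argument used in the proof of Theorem \ref{1hadamard}, one checks $Q^{T} = Q$, $Q\textbf{1} = 0$ (hence $QJ = JQ = 0$), and $QQ^{T} = Q^{2} = qI - J$, where $I$ and $J$ now have order $q$. Next, define the $2q \times 2q$ block matrix
\[
H = \left[\begin{array}{cc} Q + I & I - Q \\ Q + I & Q - I \end{array}\right].
\]
Each block has diagonal $\pm 1$ and off-diagonal $\pm 1$, so $H$ is $\pm 1$-valued. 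Compute $HH^{T}$ blockwise using $Q^{T} = Q$: the two diagonal blocks each collapse to $(Q+I)^{2} + (I-Q)^{2} = 2Q^{2} + 2I = 2(q+1)I - 2J$, while the two off-diagonal blocks each collapse to $(Q+I)^{2} + (I-Q)(Q-I) = \bigl((q+1)I + 2Q - J\bigr) + \bigl(-(q+1)I + 2Q + J\bigr) = 4Q$. Hence
\[
HH^{T} = \left[\begin{array}{cc} 2(q+1)I - 2J & 4Q \\ 4Q & 2(q+1)I - 2J \end{array}\right].
\]
Reading off the entries shows the diagonal of $HH^{T}$ is constantly $2q$; the within-block off-diagonal entries equal $-2$; the cross-block entries at positions $(i, q+i)$ equal $0$; and the remaining cross-block entries equal $\pm 4$. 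Therefore $\max_{i \neq j} |HH^{T}(i,j)| = 4$, and $H$ is a $4$-Hadamard matrix of order $2 p^{\alpha}$.

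The main obstacle is calibrating the four block signs so that (i) every block stays $\pm 1$-valued, (ii) the cross-block inner products do not blow up with $q$ through the residual $J$ term, and (iii) the final maximum equals exactly $4$ and not something smaller. A more symmetric candidate such as the matrix with blocks $Q+I,\ Q-I,\ Q-I,\ -(Q+I)$ forces complete cancellation in the cross blocks and yields only a $2$-Hadamard matrix; the asymmetric pattern above is crafted precisely so that the cross-block sum telescopes to $4Q$, delivering the exact bound $4$ demanded by the statement.
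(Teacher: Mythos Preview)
Your proof is correct and follows essentially the same Paley-doubling idea as the paper: form the symmetric Paley matrix $Q$ over $GF(p^{\alpha})$, use $Q^{2}=qI-J$, and combine $Q$ with a $2\times 2$ sign pattern to reach order $2p^{\alpha}$. The paper packages this as $H=Q\otimes\left(\begin{smallmatrix}1&1\\1&-1\end{smallmatrix}\right)+I\otimes\left(\begin{smallmatrix}1&-1\\-1&-1\end{smallmatrix}\right)$ and records $HH^{t}=2nI+(J+Q)\otimes\left(\begin{smallmatrix}-2&-4\\4&-2\end{smallmatrix}\right)$, whereas you use the block layout $\left(\begin{smallmatrix}Q+I&I-Q\\Q+I&Q-I\end{smallmatrix}\right)$; these are variants of the same construction up to a perfect-shuffle permutation and sign choices on the $2\times 2$ pattern. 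Your explicit Gram matrix $\left(\begin{smallmatrix}2(q+1)I-2J&4Q\\4Q&2(q+1)I-2J\end{smallmatrix}\right)$ has the added virtue of making it immediately visible that the off-diagonal entries are exactly $-2$, $0$, or $\pm4$ and that the value $4$ is genuinely attained (via any off-diagonal entry of $4Q$), which is precisely what distinguishes a $4$-Hadamard matrix from a mere $[4]$-Hadamard matrix in Definition~\ref{defmatrix}.
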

\begin{proof} 
Again, let $Q=(q_{ij})$ be a matrix of order $p^{\alpha}$ whose entries are given by $q_{ij}=\chi(a_i-a_j)$. Let $n=p^{\alpha}+1$ and $$H=Q\otimes \left(
\begin{array}{cc}
1&1\\
1&-1
\end{array}
\right)+I_{n-1}\otimes \left(
\begin{array}{cc}
1&-1\\
-1&-1
\end{array}
\right),$$ 
where $\otimes$ denotes the Kronecker product.

Note that $H^t\neq H$ and $H^t\neq -H$. However, $HH^t=2nI_{2n-2}+(J_{n-1}+Q)\otimes \left(
\begin{array}{cc}
-2&-4\\
4&-2
\end{array}
\right)$. This completes the proof.
\end{proof}

The proofs of Theorems \ref{A2} and \ref{A1} depend on  the cardinalities of the partition sets of a Hadamard matrix's column indexes (see Lemma 11 in \cite{P7}). In the following lemma, we generalize that result by counting cardinalities of the partition sets of an $[\alpha]$-Hadamard matrix's column indexes. 

\begin{lemma} \label{lemmahadamard}
Let $H=(h_{i,j})$ be an $[\alpha]$-Hadamard matrix of order $\zeta\geq 2$. For every two distinct integers $i,j$ where $1\leq i,j\leq \zeta$, define 
       \begin{center}
       $I_1=\{  k\in \{1,2,\ldots,\zeta\}   \mid h_{i,k}=1 \text{ and }  h_{j,k}=1   \},$\\
       $I_2=\{  k\in \{1,2,\ldots,\zeta\}   \mid h_{i,k}=-1 \text{ and } h_{j,k}=-1  \},$\\     
       $I_3=\{  k\in \{1,2,\ldots,\zeta\}   \mid h_{i,k}=1  \text{ and } h_{j,k}=-1   \},$\\
       $I_4=\{  k\in \{1,2,\ldots,\zeta\}   \mid h_{i,k}=-1 \text{ and } h_{j,k}=1   \}.$
       \end{center}
Then $\lvert I_1\rvert + \lvert I_2\rvert + \lvert I_3\rvert + \lvert I_4\rvert =\zeta$, $\lvert I_1\rvert + \lvert I_2\rvert \leq \frac{\zeta+\alpha}{2}$, and $\lvert I_3\rvert + \lvert I_4\rvert \leq \frac{\zeta+\alpha}{2}$. Furthermore, if $H$ is an $\alpha$-Hadamard matrix, then $\zeta$ and $\alpha$ have the same parity.
\end{lemma}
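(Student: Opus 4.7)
The plan is to reduce the entire lemma to a single linear identity coming from expanding $HH^t(i,j)$ in terms of the four cardinalities, and then read off the partition identity, the two upper bounds, and the parity statement from it. The argument is essentially bookkeeping, so I do not expect any substantial obstacle; the only ``insight'' is recognizing which quantity the inner product of rows $i$ and $j$ is actually counting.

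First I would observe that the sets $I_1,I_2,I_3,I_4$ are classified by the ordered pair of signs $(h_{i,k},h_{j,k})\in\{\pm 1\}^2$. Since every column index $k$ produces exactly one such pair, $\{I_1,I_2,I_3,I_4\}$ is a partition of $\{1,2,\ldots,\zeta\}$, which immediately gives $|I_1|+|I_2|+|I_3|+|I_4|=\zeta$.

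The main step is the computation
$$HH^t(i,j)=\sum_{k=1}^{\zeta} h_{i,k}h_{j,k}=\bigl(|I_1|+|I_2|\bigr)-\bigl(|I_3|+|I_4|\bigr),$$
because the product $h_{i,k}h_{j,k}$ equals $+1$ exactly on $I_1\cup I_2$ and $-1$ exactly on $I_3\cup I_4$. Since $H$ is $[\alpha]$-Hadamard, $|HH^t(i,j)|\leq\alpha$, and combining this two-sided bound with the partition identity gives $|I_1|+|I_2|\leq\tfrac{\zeta+\alpha}{2}$ and $|I_3|+|I_4|\leq\tfrac{\zeta+\alpha}{2}$ simultaneously.

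For the parity statement, suppose $H$ is an $\alpha$-Hadamard matrix, so the value $\alpha$ is attained: there exist $i\neq j$ with $(|I_1|+|I_2|)-(|I_3|+|I_4|)=\pm\alpha$. Adding or subtracting this to the partition identity $(|I_1|+|I_2|)+(|I_3|+|I_4|)=\zeta$ shows that $\zeta\pm\alpha=2(|I_1|+|I_2|)$, which is even, so $\zeta$ and $\alpha$ must have the same parity.
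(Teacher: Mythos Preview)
Your proof is correct and follows essentially the same approach as the paper's: both arguments note that $I_1,\ldots,I_4$ partition the column index set, express $HH^t(i,j)=(|I_1|+|I_2|)-(|I_3|+|I_4|)$, and combine the bound $|HH^t(i,j)|\leq\alpha$ with the partition identity to obtain the inequalities and the parity conclusion. The only cosmetic difference is that the paper phrases the parity step as $|I_1|+|I_2|=\tfrac{\zeta+\alpha}{2}$ being an integer, while you phrase it as $\zeta\pm\alpha$ being even.
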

\begin{proof} Since $I_1, I_2, I_3, I_4$ partitions the set of $H$'s column indexes, then $\lvert I_1\rvert + \lvert I_2\rvert + \lvert I_3\rvert + \lvert I_4\rvert =\zeta$.
Let $i\neq j$ where $1\leq i,j\leq \zeta$. Since $\lvert HH^t(i,j)\rvert \leq \alpha$, then $\lvert \sum_{k=1}^{\zeta}h_{i,k}h_{j,k}\rvert \leq \alpha$, and so
    $\lvert \lvert I_1\rvert + \lvert I_2\rvert - \lvert I_3\rvert - \lvert I_4\rvert \rvert \leq \alpha.$
Since $\lvert I_1\rvert + \lvert I_2\rvert + \lvert I_3\rvert + \lvert I_4\rvert =\zeta$, then
     $\lvert I_1\rvert + \lvert I_2\rvert \leq \frac{\zeta+\alpha}{2}$ and 
     $\lvert I_3\rvert + \lvert I_4\rvert \leq \frac{\zeta+\alpha}{2}$.
    
Furthermore, if $H$ is an $\alpha$-Hadamard matrix, then there exist $i',j'$, $1\leq i' \neq j'\leq \zeta$, such that $\lvert HH^t(i',j')\rvert = \alpha$, which leads to $\lvert I_3\rvert + \lvert I_4\rvert = \frac{\zeta-\alpha}{2}$ and $\lvert I_1\rvert + \lvert I_2\rvert = \frac{\zeta+\alpha}{2}$. Thus $\zeta$ and $\alpha$ have the same parity. 
\end{proof}


\section{Proof of Theorem \ref{Q12}}
\label{sec:Ramsey}


Given a vertex $v$ of a graph $G = (V, E)$, we denote by $N(v)$ the set of neighbors of $v$. Recall that a graph $G$ is strongly
regular with parameters $(n, k, \lambda,\mu)$ when
\begin{itemize}
    \item $G$ has $n$ vertices;
    \item $G$ is $k$-regular;
    \item if $vw$ is an edge of $G$, then $\mid N(v) \cap N(w)\mid  = \lambda$;
    \item if $vw$ is not an edge of $G$, then $\mid N(v) \cap N(w)\mid  = \mu$.
\end{itemize}

In \cite{P7}, the vertex set of a strongly regular graph is partitioned into four sets based on the neighborhoods of two distinct vertices (see Lemma 10 in \cite{P7}). We shall generalize this result in Lemma \ref{lemmasrg}, which is essential in proving Theorem \ref{Q12}.

\begin{lemma} \label{lemmasrg}
Let $G=(V(G),E(G))$ be a strongly regular graph with parameters $(n,k,\lambda,\mu)$. For $a,b \in V(G), a\neq b$, define
   \begin{center}
       $G_1=\{c\in V(G) \backslash \{a,b\}\mid ac\in E(G) \text{ and } bc \in E(G)\},$\\
       $G_2=\{c\in V(G) \backslash \{a,b\}\mid ac\notin E(G) \text{ and } bc \notin E(G)\},$\\
       $G_3=\{c\in V(G) \backslash \{a,b\}\mid ac\notin E(G) \text{ and } bc \in E(G)\},$\\
       $G_4=\{c\in V(G) \backslash \{a,b\}\mid ac\in E(G) \text{ and } bc \notin E(G)\}.$\\
   \end{center}
 Then $\lvert G_1 \rvert =\lambda$, $\lvert G_2 \rvert = n-2k+\lambda$, $\lvert G_3 \rvert = \lvert G_4 \rvert =k-\lambda-1$, if $ab\in E(G)$ and $\lvert G_1 \rvert =\mu$, $\lvert G_2 \rvert =n-2-2k+\mu$, $\lvert G_3 \rvert = \lvert G_4 \rvert =k-\mu$, if $ab\notin E(G)$. 
\end{lemma}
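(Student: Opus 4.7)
The plan is to split into two cases based on whether $ab \in E(G)$ or $ab \notin E(G)$, and in each case apply the defining properties of a strongly regular graph together with elementary counting of the neighborhoods $N(a)$ and $N(b)$.

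First, I would observe that $G_1 = N(a) \cap N(b)$, since $a \notin N(a)$ and $b \notin N(b)$ automatically exclude $a$ and $b$ from the intersection. The strongly regular graph definition then gives $\lvert G_1 \rvert = \lambda$ when $ab \in E(G)$, and $\lvert G_1 \rvert = \mu$ when $ab \notin E(G)$.

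Next, I would compute $\lvert G_3 \rvert$ and $\lvert G_4 \rvert$ by restricting attention to the neighborhood of one endpoint. Writing $G_3 = (N(b) \setminus N(a)) \setminus \{a\}$, in the edge case $a \in N(b)$, so $\lvert N(b) \setminus \{a\} \rvert = k - 1$, and $\lambda$ of its elements lie in $N(a)$ (they form $G_1$), leaving $\lvert G_3 \rvert = k - 1 - \lambda$. A symmetric argument using $G_4 = (N(a) \setminus N(b)) \setminus \{b\}$ gives $\lvert G_4 \rvert = k - 1 - \lambda$. In the non-edge case, $a \notin N(b)$, so $\lvert N(b) \rvert = k$, and subtracting the $\mu$ elements of $N(a) \cap N(b)$ yields $\lvert G_3 \rvert = k - \mu$, with $\lvert G_4 \rvert = k - \mu$ by symmetry.

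Finally, $\lvert G_2 \rvert$ is forced by the partition identity $\lvert G_1 \rvert + \lvert G_2 \rvert + \lvert G_3 \rvert + \lvert G_4 \rvert = n - 2$. Direct substitution yields $\lvert G_2 \rvert = n - 2k + \lambda$ in the edge case and $\lvert G_2 \rvert = n - 2 - 2k + \mu$ in the non-edge case, matching the claim. I do not anticipate a substantial obstacle here; the argument reduces to careful bookkeeping of whether $a$ or $b$ themselves lie in the relevant neighborhoods, which is precisely where the two cases diverge.
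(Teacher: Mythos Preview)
Your proof is correct and follows essentially the same two-case, degree-counting argument as the paper: both obtain $\lvert G_1\rvert$ directly from the strongly regular definition and $\lvert G_3\rvert,\lvert G_4\rvert$ by counting neighbours of one endpoint. The only minor difference is that you recover $\lvert G_2\rvert$ from the partition identity $\lvert G_1\rvert+\lvert G_2\rvert+\lvert G_3\rvert+\lvert G_4\rvert=n-2$, whereas the paper instead invokes the well-known fact that $\overline{G}$ is strongly regular with parameters $(n,n-k-1,n-2-2k+\mu,n-2k+\lambda)$ and reads off $\lvert G_2\rvert=\overline{\mu}$ or $\overline{\lambda}$ directly; your route is slightly more self-contained.
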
 
\begin{proof} It is a general knowledge  that the complement of $G$, $\overline{G}$, is also a strongly regular graph with parameters $(n,\overline{k},\overline{\lambda},\overline{\mu})$, where $\overline{k}=n-k-1$, $\overline{\lambda}=n-2-2k+\mu$, and $\overline{\mu}=n-2k+\lambda$. 

Consider the following two cases, where $d(v)$ denotes the degree of the vertex $v$ in $G$.

\textit{Case 1}. $ab\in E(G)$, then
$\lvert G_1 \rvert =\lambda$. Since $ab\notin E(\overline{G})$, then $\lvert G_2\rvert = \lvert \{c\in V(G) \backslash \{a,b\}\mid ac\notin E(G) \text{ and } bc \notin E(G)\}\rvert = \lvert \{c\in V(\overline{G}) \backslash \{a,b\}\mid ac\in E(\overline{G}) \text{ and } bc \in E(\overline{G})\}\rvert 
=\overline{\mu}$. By considering the neighbours of $a$, we obtain $\lvert G_1\rvert + \lvert G_4\rvert + \lvert \{b\}\rvert =d(a)$, and hence $\lvert G_4\rvert = k-\lambda-1$.
By considering the neighbours of $b$, $\lvert G_1\rvert + \lvert G_3\rvert + \lvert \{a\}\rvert = d(b)$, and hence $\lvert G_3\rvert =k-\lambda-1$. Thus, $\lvert G_1\rvert =\lambda$, $\lvert G_2\rvert = \overline{\mu}$, $\lvert G_3\rvert = \lvert G_4\rvert =k-\lambda-1$.
    
\textit{Case 2}. $ab\not\in E(G)$, then $\lvert G_1\rvert =\mu$. Since $ab\in E(\overline{G})$, then $\lvert G_2\rvert =\lvert \{c\in V(G) \backslash \{a,b\}\mid ac\notin E(G) \text{ and } bc \notin E(G)\}\rvert = \lvert \{c\in V(\overline{G}) \backslash \{a,b\}\mid ac\in E(\overline{G}) \text{ and } bc \in E(\overline{G})\}\rvert =\overline{\lambda}$.
By considering the neighbours of $a$, we obtain $\lvert G_1\rvert + \lvert G_4\rvert =d(a)$, and hence $\lvert G_4\rvert =k-\mu$. And by considering the neighbours of $b$, $\lvert G_1\rvert + \lvert G_3\rvert =d(b)$, and hence $\lvert G_3\rvert =k-\mu$. And so, $\lvert G_1\rvert =\mu$, $\lvert G_2\rvert =\overline{\lambda}$, $\lvert G_3\rvert = \lvert G_4\rvert = k-\mu$. 
\end{proof}

In the following lemma, we shall show that the function $\theta$ defined in Theorem \ref{Q12} is always an integer. This function will be further investigated in Section \ref{OPD}. 
\begin{lemma}\label{newlemma}
If $G$ is a strongly regular graph with parameters $(n,k,\lambda, \mu)$ and $\theta=\max\{k/2,\lambda,\mu,(n-k-1)/2,$ $n-2-2k+\mu,n-2k+\lambda\}$, then $\theta$ is an integer.
\end{lemma}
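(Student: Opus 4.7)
The plan is to pin down exactly which of the six terms in $\theta$ can fail to be integers, and then to show that in every such case some integer-valued term dominates. Of the arguments of the maximum, the four values $\lambda,\mu,n-2-2k+\mu,n-2k+\lambda$ are plainly integers, so only $k/2$ and $(n-k-1)/2$ are at risk; these fail to be integers precisely when $k$ or $n-k-1$ is odd. By the handshake lemma applied to $G$ itself, a $k$-regular graph with $k$ odd forces $n$ even, and applying the same lemma to $\overline{G}$ shows that $n-k-1$ odd also forces $n$ even. Consequently $k$ and $n-k-1$ cannot both be odd, so at most one of $k/2,(n-k-1)/2$ is a half-integer.

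Since the six-term set defining $\theta$ is invariant under the transformation $(n,k,\lambda,\mu)\mapsto(n,n-k-1,n-2-2k+\mu,n-2k+\lambda)$ that replaces $G$ by $\overline{G}$ (cf.\ Lemma \ref{lemmasrg}), I may assume without loss of generality that $k$ is odd. Then $n$ is even, $(n-k-1)/2$ is an integer, and only $k/2$ can possibly be the half-integer value that needs to be dominated. I then split on whether $k$ is large or small relative to $n$. If $n\geq 2k+1$, then $(n-k-1)/2\geq k/2$; since the left side is an integer and the right is a half-integer, the inequality is strict, so the integer term $(n-k-1)/2$ already dominates $k/2$, and the maximum is integral.

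The remaining case is $n\leq 2k$, where $(n-k-1)/2<k/2$ and so I must produce an integer among $\{\lambda,\mu,n-2-2k+\mu,n-2k+\lambda\}$ that is $\geq k/2$. For this I invoke the standard strongly-regular identity $k(k-\lambda-1)=(n-k-1)\mu$. Assume for contradiction that both $\lambda$ and $\mu$ are strictly below $(k+1)/2$; since they are integers and $k$ is odd, this forces $\lambda,\mu\leq(k-1)/2$. Then $k-\lambda-1\geq(k-1)/2$, and combining this with $n-k-1\leq k-1$ (which is exactly the hypothesis $n\leq 2k$) yields
$$\mu \;=\; \frac{k(k-\lambda-1)}{n-k-1} \;\geq\; \frac{k(k-1)/2}{k-1} \;=\; \frac{k}{2}.$$
Because $\mu$ is an integer and $k/2$ a half-integer, this gives $\mu\geq(k+1)/2$, contradicting the assumption. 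Hence one of $\lambda,\mu$ is at least $(k+1)/2>k/2$, and the maximum is again integral.

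The only delicate point is this last subcase: the bounds on $\lambda$ and $\mu$ coming from the SRG identity are individually weak, and the argument works only because the two inequalities $k-\lambda-1\geq(k-1)/2$ and $n-k-1\leq k-1$ exactly cancel. The rest of the proof is bookkeeping: the handshake parity argument to rule out the simultaneous failure of $k/2$ and $(n-k-1)/2$, the complementation symmetry that lets me fix $k$ odd, and a cheap strict-inequality comparison when $n\geq 2k+1$.
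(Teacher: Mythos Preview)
Your proof is correct and rests on the same two ingredients as the paper's---the complementation symmetry of the six-term set and the strongly-regular identity $k(k-\lambda-1)=(n-k-1)\mu$---but your organization is tighter. The paper argues by contradiction: assuming $\theta=k/2$ with $k$ odd and $k\geq\overline{k}$, it splits into Case~1 ($k>\overline{k}$, i.e.\ $n\leq 2k$) and Case~2 ($k=\overline{k}$, i.e.\ $n=2k+1$), handling the latter by a somewhat roundabout double application of the hypothesis to $G$ and $\overline{G}$. Your handshake-lemma observation shows that Case~2 is actually vacuous: $k$ odd forces $n$ even, so $n=2k+1$ cannot occur. In your decomposition the range $n\geq 2k+1$ is disposed of trivially by the integer $(n-k-1)/2$ strictly dominating the half-integer $k/2$, and the range $n\leq 2k$ is handled by bounding $\mu$ directly from the identity---essentially the paper's Case~1 in slightly different dress. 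So the approaches coincide on the substantive case, but your parity argument eliminates a case the paper treats unnecessarily.
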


\begin{proof}
Assume that there exists a strongly regular graph $G$ such that $\theta$ is not an integer. It is well known that for a strongly regular graph, the following equation holds:
\begin{equation}\label{musthold}
    (n-k-1)\mu=k(k-\lambda-1)
\end{equation}
Since $\overline{G}$ is a strongly regular graph with parameters $(n,\overline{k},\overline{\lambda},\overline{\mu})$, where $\overline{k}=n-k-1$, $\overline{\lambda}=n-2-2k+\mu$, and $\overline{\mu}=n-2k+\lambda$; then $\theta=\max\{k/2,\overline{k}/2\}$. Without loss of generality, assume that $k\geq \overline{k}$, and so, $\theta=k/2$ and $k$ should be odd.

\textit{Case 1.} $k>\overline{k}$: From Equation $(\ref{musthold})$,  $\mu>k-\lambda-1$ or $\lambda+\mu+1>k$. Since $\theta=k/2$ is not an integer, then $k/2>\lambda$ and $k/2>\mu$ and hence $k>\lambda+\mu$, a contradiction.

\textit{Case 2.} $k=\overline{k}$: Here $n=2k+1$ and $\overline{G}$ is a strongly regular graph with parameters $(n,k,\mu-1,\lambda+1)$. From Equation $(\ref{musthold})$, $k=\mu+\lambda+1$. Since $\theta=k/2$ is not an integer, then $k/2>n-2k+\lambda$ and $k/2>\mu$, and so $\mu-1>\lambda$ and $\lambda+1>\mu$. Thus, we have $\overline{\lambda}>\lambda$ and $\overline{\mu}>\mu$. Since $k=\overline{k}$, we may proceed with $\overline{G}$ instead which leads to $\lambda>\overline{\lambda}$ and $\mu>\overline{\mu}$, a contradiction. 
\end{proof}

The following two theorems will also be utilized in proving Theorem \ref{Q12}. The theorems were originally proved in \cite{P7} and \cite{P7new}. However, we rewrite the conditions of the theorems, adding those necessary but omitted to be included.




\begin{theorem}\label{batasatasZ}
Given positive integers $n$, $k\geq 2$, and $m\geq 2$. If $m\left(\left\lceil \frac{(n-1)k^2+k+2m-1}{m}\right\rceil-1\right)$ is divisible by $k$, then $$M_m(K_{2,n};k)\leq \left\lceil \frac{(n-1)k^2+k+2m-1}{m}\right\rceil.$$
\end{theorem}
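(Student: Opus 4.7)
The plan is to prove the upper bound by a double-counting argument on cherries (paths of length two) in the spirit of K\H{o}v\'ari--S\'os--Tur\'an. Suppose for contradiction that some $k$-edge-colouring of $K_{c\times m}$, with $c=\lceil((n-1)k^2+k+2m-1)/m\rceil$, contains no monochromatic copy of $K_{2,n}$. For a vertex $v$ of $K_{c\times m}$ write $d_i(v)$ for its colour-$i$ degree; since $v$ has $(c-1)m$ neighbours in $K_{c\times m}$, we have $\sum_{i=1}^{k}d_i(v)=(c-1)m$.

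Double-count colour-$i$ cherries. By centres, $T_i:=\sum_v\binom{d_i(v)}{2}$; by pairs of leaves, $T_i=\sum_{\{x,y\}}|N_i(x)\cap N_i(y)|$, where $N_i(x)$ is the colour-$i$ neighbourhood of $x$. The key observation is that $|N_i(x)\cap N_i(y)|\ge n$ immediately produces a monochromatic $K_{2,n}$ in colour $i$: any $n$ common colour-$i$ neighbours automatically lie outside both partite sets containing $x$ and $y$, because every colour-$i$ edge is already an edge of $K_{c\times m}$. Hence, under the contradiction hypothesis, $|N_i(x)\cap N_i(y)|\le n-1$ for every pair, so $T_i\le(n-1)\binom{cm}{2}$, and summing over colours,
\begin{equation*}
\sum_{i=1}^{k} T_i \le k(n-1)\binom{cm}{2}.
\end{equation*}

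For the matching lower bound, I apply Jensen's inequality to the convex function $\binom{\cdot}{2}$ vertex by vertex; the divisibility hypothesis $k\mid m(c-1)$ guarantees that $(c-1)m/k$ is an integer, so the bound $\sum_i\binom{d_i(v)}{2}\ge k\binom{(c-1)m/k}{2}$ is attained at an integer split. Summing over the $cm$ vertices and combining with the previous display gives, after abbreviating $A=(c-1)m$ and clearing the factor $cm$, the inequality
\begin{equation*}
A^2-A\bigl(k+k^2(n-1)\bigr)\le k^2(n-1)(m-1).
\end{equation*}
Now I invoke the choice of $c$: from $cm\ge(n-1)k^2+k+2m-1$ one obtains $A\ge(n-1)k^2+k+m-1$, which says precisely $A-m+1\ge k+k^2(n-1)$. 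Substituting back collapses the left-hand side to at least $A(m-1)$, forcing $A\le k^2(n-1)$ when $m\ge 2$; but $A\ge(n-1)k^2+k+m-1>k^2(n-1)$, the desired contradiction.

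The main obstacle I anticipate is not conceptual but bookkeeping: one must check that the Jensen lower bound (for which the divisibility hypothesis provides the integer-tightness) and the no-$K_{2,n}$ upper bound combine to give sharpness at exactly the ceiling value in the statement. The arithmetic coincidence $k+k^2(n-1)=A-m+1$ at the threshold value of $A$ is what calibrates the formula for $c$ to the counting argument; everything else is routine convexity.
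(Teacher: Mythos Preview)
Your argument is correct. The paper does not supply its own proof of this theorem: it is quoted from Perondi--Carmelo \cite{P7}, with the divisibility hypothesis added by the present authors. Your cherry double-count is precisely the mechanism behind Lemma~\ref{A3} (also quoted from \cite{P7}), specialised to $n_1=\cdots=n_k=n$ and $s=m$; the arithmetic you carry out afterwards is exactly the verification that $c=\lceil ((n-1)k^2+k+2m-1)/m\rceil$ satisfies the strict inequality~\eqref{XX1}. So your approach and the paper's cited source coincide.

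One small remark: you slightly overstate the role of the divisibility hypothesis. Jensen applied to the real convex function $x\mapsto x(x-1)/2$ already gives $\sum_i\binom{d_i(v)}{2}\ge k\binom{(c-1)m/k}{2}$ for every vertex, whether or not $(c-1)m/k$ is an integer; the integer minimum of $\sum_i\binom{d_i}{2}$ over $\sum_i d_i=A$ exceeds the Jensen bound by $r(k-r)/(2k)\ge 0$ where $r=A\bmod k$. Your chain of inequalities therefore goes through verbatim without the hypothesis $k\mid m(c-1)$, and the final contradiction $A\le k^2(n-1)$ versus $A\ge k^2(n-1)+k+m-1$ is unaffected. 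The divisibility condition is only needed if one insists on reading $\binom{(c-1)m/k}{2}$ as a binomial coefficient with integer top argument, which is presumably why the present authors inserted it.
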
 

\begin{theorem}\label{newZ}
Given positive integers $c,k,n_1,\ldots,n_k$, $c,k,n_1\geq 2$, let $S=\sum_{i=1}^k n_i$. If $(c-1)\left\lceil \frac{ck(S-k)+(c-1)k}{(c-1)^2}\right\rceil$ is divisible by $k$, then
$$m_c(K_{2,n_1},K_{2,n_2},\ldots , K_{2,n_k})\leq \left\lceil \frac{ck(S-k)+(c-1)k}{(c-1)^2}\right\rceil.$$
\end{theorem}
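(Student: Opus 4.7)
The plan is to establish the upper bound by a standard double-counting of monochromatic cherries (paths of length two). Let $s_0=\left\lceil\frac{ck(S-k)+(c-1)k}{(c-1)^2}\right\rceil$, and suppose for contradiction that some $k$-coloring of $E(K_{c\times s_0})$ contains no monochromatic $K_{2,n_i}$ in color $i$ for any $i\in\{1,\ldots,k\}$. Writing $d_i(w)$ for the color-$i$ degree of a vertex $w$, every vertex of $K_{c\times s_0}$ has total degree $D:=(c-1)s_0$, so $\sum_{i=1}^k d_i(w)=D$ for every $w$.

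I would then count color-$i$ cherries in two ways. Indexing by the middle vertex gives $\sum_w\binom{d_i(w)}{2}$; indexing by the endpoint pair gives $\sum_{\{u,v\}}\lvert N_i(u)\cap N_i(v)\rvert$, where the sum is over all unordered pairs of distinct vertices of $K_{c\times s_0}$---including pairs lying in the same part, since the two vertices on the small side of $K_{2,n_i}$ need not be adjacent. Avoiding a monochromatic $K_{2,n_i}$ forces $\lvert N_i(u)\cap N_i(v)\rvert\leq n_i-1$ for every such pair; summing over pairs and then over the $k$ colors gives
$$\sum_{w}\sum_{i=1}^k\binom{d_i(w)}{2}\leq (S-k)\binom{cs_0}{2}.$$

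The next step is to lower bound the same quantity by convexity of $x\mapsto\binom{x}{2}$, equivalently $\sum_i d_i(w)^2\geq D^2/k$, which yields $\sum_i\binom{d_i(w)}{2}\geq D(D-k)/(2k)$ for each $w$ and thus $\sum_w\sum_i\binom{d_i(w)}{2}\geq cs_0\cdot D(D-k)/(2k)$. The divisibility hypothesis $k\mid(c-1)s_0$ aligns this Jensen minimum with an attainable integer sequence $d_i(w)=D/k$, so the convex lower bound is sharp at $s=s_0$. Substituting $D=(c-1)s_0$, combining with the upper bound, dividing by $cs_0/2$, and simplifying reduces to
$$(c-1)^2 s_0\leq ck(S-k)+(c-1)k-\frac{k(S-k)}{s_0},$$
which contradicts the defining inequality $(c-1)^2 s_0\geq ck(S-k)+(c-1)k$ of $s_0$, since $S>k$ (because $n_1\geq 2$ forces $S\geq k+1$). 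The main obstacle I anticipate is the bookkeeping for the cherry identity $\sum_w\binom{d_i(w)}{2}=\sum_{\{u,v\}}\lvert N_i(u)\cap N_i(v)\rvert$ in the multipartite setting---one must verify that each cherry is counted exactly once on each side, and that the divisibility condition is precisely what keeps the integer and convex extrema equal at $s=s_0$ so the ceiling in the bound is the right rounding.
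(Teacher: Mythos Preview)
The paper does not supply its own proof of this theorem: it is quoted from \cite{P7new} (with the divisibility hypothesis added), so there is no in-paper argument to compare against. Your cherry-counting approach is exactly the method behind the cited result---the same convexity/double-counting that underlies Lemma~\ref{A3} in the paper for the set version---and your algebra is correct: from $\sum_w\sum_i\binom{d_i(w)}{2}\le (S-k)\binom{cs_0}{2}$ and the Jensen lower bound you obtain $(c-1)^2 s_0 < ck(S-k)+(c-1)k$, contradicting the defining inequality of $s_0$.

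One remark: your explanation of the role of the divisibility hypothesis is off. The convexity bound $\sum_{i=1}^k\binom{d_i(w)}{2}\ge \frac{D(D-k)}{2k}$ (with $D=(c-1)s_0$) follows from Jensen applied to the real convex function $x\mapsto x(x-1)/2$ and is valid for \emph{any} nonnegative integers $d_1,\ldots,d_k$ summing to $D$, whether or not $k\mid D$. Sharpness of this bound is irrelevant to the contradiction, since you only need it as a lower estimate; the strict inequality already comes from the term $k(S-k)/s_0>0$. So your proof actually goes through without ever invoking $k\mid (c-1)s_0$, and the divisibility condition in the statement is not what ``keeps the integer and convex extrema equal''---it is simply not used. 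You may wish to drop that sentence or note that the hypothesis appears to be superfluous for this argument.
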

Now, we are ready to prove Theorem \ref{Q12} by employing the method introduced in \cite{P7}. 

\textbf{Proof of Theorem \ref{Q12}}. 
Let $V(G)=\{1,2,\ldots,n\}$ and $H=[h_{i,j}]_{\zeta\times \zeta}$ be a symmetric $[\alpha]$-Hadamard matrix of order $\zeta\geq 2$. Consider the graph $K_{n \times \zeta}$, where its vertices are partitioned into $n$ classes $L_1,L_2,\ldots,L_n$, where $L_a=\{(a,1'),$  $(a,2'),\ldots,(a,\zeta')\}$, for $a\in \{1,2,\ldots,n\}$. Define an edge coloring on graph $K_{n\times \zeta}$ as follows:
$$\psi((a,i)(b,j))= 
\begin{cases}
 +h_{i,j} \text{ if } ab\in E(G),\\
 -h_{i,j} \text{ if } ab\notin E(G).
\end{cases}$$
Since $H$ is symmetric, then $\psi((a,i)(b,j))= \psi ((b,j)(a,i))$.
Let $w\in \{-1,1\}$, $v_1=(a,i)$, $v_2=(b,j)$, where $v_1\neq v_2$, and  $\delta(v_1,v_2,w)=\lvert \{v\in V(K_{n\times \zeta})\mid \psi(v_1v)=\psi(v_2v)=w\}\rvert$. 
We shall prove that $\delta (v_1,v_2,w)\leq \theta \zeta$.

 \textit{Case 1}. $a\neq b$ and $i=j$.  A vertex $(c,k)$ is simultaneously adjacent to $v_1=(a,i)$ and $v_2=(b,i)$ with color $w=1$ if and only if one of the following conditions hold:
\begin{enumerate}
        \item $c$ is simultaneously adjacent to $a$ and $b$ in $G$ and $h_{i,k}=1$,
        \item $c$ is simultaneously not adjacent to $a$ and $b$ in $G$ and  $h_{i,k}=-1$.
\end{enumerate}
Let $p$ be the number
of entries equal to $1$ in the $i^{th}$ row of $H$. Then, by Lemma \ref{lemmasrg}, $ \delta (v_1,v_2,1)=p\lvert G_1\rvert +(\zeta-p)\lvert G_2\rvert  \leq \max\{\lvert G_1\rvert ,\lvert G_2\rvert \}\zeta$. 
With similar reason, for $w=-1$, we obtain $\delta (v_1,v_2,w)\leq \max\{\lvert G_1\rvert ,\lvert G_2\rvert \}\zeta$.
    
\textit{Case 2}.  $a=b$ and  $i\neq j$. Since the vertex $a=b$ has degree $k$ in $G$ and has degree $\overline{k}$ in graph $\overline{G}$, then by Lemma \ref{lemmahadamard}, we get $\delta (v_1,v_2,1)=\lvert I_1\rvert k+ \lvert I_2\rvert \overline{k} \leq (\lvert I_1\rvert +\lvert I_2\rvert )\max\{k,\overline{k}\} \leq \frac{(\zeta+\alpha)}{2}\max\{k,\overline{k}\}.$
With a similar reason, for $w=-1$, we obtain
$\delta (v_1,v_2,-1) \leq \frac{(\zeta+\alpha)}{2}\max\{k,\overline{k}\}$.
    
\textit{Case 3.}  $a\neq b$ and  $i\neq j$. A vertex $(c,k)$ is simultaneously adjacent to $v_1=(a,i)$ and $v_2=(b,j)$ with color $w=1$ if and only if one of the following conditions holds:
\begin{center}
$ ac\in E(G),  bc \in E(G),  h_{i,k}=1 \text{ and }  h_{j,k}=1$,\\
$ ac\notin E(G),  bc \notin E(G),  h_{i,k}=-1 \text{ and }  h_{j,k}=-1$,\\
$ ac\in E(G),  bc \notin E(G),  h_{i,k}=1 \text{ and }  h_{j,k}=-1$, or\\
$ ac\notin E(G),  bc \in E(G),  h_{i,k}=-1 \text{ and }  h_{j,k}=1$.
\end{center}
Then $\delta (v_1,v_2,1)=\lvert G_1\rvert \lvert I_1\rvert +\lvert G_2\rvert \lvert I_2\rvert +\lvert G_3\rvert \lvert I_4\rvert +\lvert G_4\rvert \lvert I_3\rvert $. By Lemma \ref{lemmahadamard}, 
\begin{align*}
    \delta (v_1,v_2,1)
&\leq \max\{\lvert G_1\rvert,\lvert G_2\rvert ,\lvert G_3\rvert,\lvert G_4\rvert \}(\lvert I_1\rvert +\lvert I_2\rvert +\lvert I_3\rvert +\lvert I_4\rvert )\\
&=\max\{\lvert G_1\rvert,\lvert G_2\rvert,\lvert G_3\rvert,\lvert G_4\rvert \}\zeta.
\end{align*}
With a similar reason, for $w=-1$,
$\delta (v_1,v_2,-1)\leq \max\{\lvert G_1\rvert,\lvert G_2\rvert,\lvert G_3\rvert,\lvert G_4\rvert \}\zeta$.

From all the cases and by Lemma \ref{lemmasrg}, we conclude that 
\begin{align*}
    \delta (v_1,v_2,w)&\leq \max\{\frac{k}{2},\frac{\overline{k}}{2},\lvert G_1\rvert ,\lvert G_2\rvert,\lvert G_3\rvert,\lvert G_4\rvert \}(\zeta+\alpha)\\
    &=\max\{\frac{k}{2},\frac{\overline{k}}{2},\lambda,\overline{\lambda},\mu,\overline{\mu}\}(\zeta+\alpha)\\
    &=\theta (\zeta+\alpha).
\end{align*}
From Lemma \ref{newlemma}, $\theta(\zeta+\alpha)$ is an integer, and hence there is no monochromatic $K_{2,\theta (\zeta+\alpha)+1}$. Thus, we conclude that
$$M_\zeta(K_{2,\theta (\zeta+\alpha)+1};2)\geq n+1 \text{ and }  m_{n}(K_{2,\theta (\zeta+\alpha)+1};2)\geq \zeta+1.$$

Furthermore,
\begin{enumerate}
\item[\rm $i).$] From Theorem \ref{batasatasZ}, if $\zeta$ or $\left\lceil \frac{4\theta\alpha+1}{\zeta}  \right\rceil-1$ are even, then
$M_\zeta(K_{2,\theta (\zeta+\alpha)+1};2)\leq 4\theta +2 +\left\lceil \frac{4\theta\alpha+1}{\zeta}  \right\rceil$. 
\item[\rm $ii).$] From Theorem \ref{newZ}, if $n-1$ or $\left\lceil \frac{4n \theta(\zeta+\alpha)}{(n-1)^2}+\frac{2}{n-1}  \right\rceil$ are even, then we obtain $ m_{n}(K_{2,\theta (\zeta+\alpha)+1};2)\leq \left\lceil \frac{4n \theta(\zeta+\alpha)}{(n-1)^2}+\frac{2}{n-1}  \right\rceil$. \qed
\end{enumerate}



In the rest of the section, we employ Theorem \ref{Q12} for strongly regular graphs of particular parameters. For instance, we consider a strongly regular graph with parameters $(4n-3, 2n-2, n-2, n-1)$ in Example \ref{Z2}. In this case, $\theta=n-1$.

\begin{example}\label{Z2}
Suppose that there exist a  strongly regular graph with parameters $(4n-3, 2n-2, n-2, n-1)$ and a  symmetric $[\alpha]$-Hadamard matrix of order $\zeta$. If $\zeta$ or $\left\lceil \frac{4(n-1)\alpha+1}{\zeta}  \right\rceil-1$ are even, then
$$4n-2\leq M_\zeta(K_{2,(\zeta+\alpha)(n-1)+1};2) \leq  4n-2+\left\lceil \frac{4(n-1)\alpha+1}{\zeta}  \right\rceil.$$
Furthermore, if  $\zeta\geq 4(n-1)\alpha+1$, then
 $$4n-2\leq M_\zeta(K_{2,(\zeta+\alpha)(n-1)+1};2) \leq  4n-1.$$
Additionally, if $\alpha=0$ and $n\geq \frac{\zeta +6}{4}$, then $$m_{4n-3}(K_{2,\zeta (n-1)+1};2)=\zeta +1.$$
\end{example}

The $n\times n$ square Rook's graph, which is the line graph of a balanced complete bipartite graph $K_{n,n}$, is a strongly regular graph with parameters $(n^2,2n-2, n-2,2)$. If $n\geq 4$, then $\theta=(n-2)(n-1)$. 

\begin{example} \label{cor1}
Let $n\geq 4$ and suppose that there exists a symmetric $[\alpha]$-Hadamard matrix of order $\zeta\geq 2$. If $\zeta$ or $\left\lceil \frac{4(n-2)(n-1)\alpha+1}{\zeta}  \right\rceil-1$ are even, then $$n^2+1\leq M_\zeta(K_{2,(n-2)(n-1)(\zeta+\alpha)+1};2)\leq 4(n-2)(n-1) +2 +\left\lceil \frac{4(n-2)(n-1)\alpha+1}{\zeta}  \right\rceil.$$
Furthermore, if $\alpha=0$, then
$$n^2+1\leq M_\zeta(K_{2,(n-2)(n-1)\zeta+1};2)\leq 4n^2-12n+10.$$
\end{example}

The line graph of a complete graph $K_n$ is a strongly regular graph with parameters $(n(n-1)/2,2(n-2),n-2,4)$. For $n\geq 6$, $\theta=(n-3)(n-2)/2$. 

\begin{example} \label{cor2}
Let $n\geq 6$ and suppose that there exists a symmetric $[\alpha]$-Hadamard matrix of order $\zeta\geq 2$. If $\zeta$ or $\left\lceil \frac{2(n-3)(n-2)\alpha+1}{\zeta}  \right\rceil-1$ are even, then $\frac{n(n-1)}{2}+1\leq M_\zeta(K_{2,\frac{(n-3)(n-2)}{2}(\zeta+\alpha)+1};2)\leq 2(n-3)(n-2) +2 +\left\lceil \frac{2(n-3)(n-2)\alpha+1}{\zeta}  \right\rceil.$
Furthermore, if $\alpha=0$, then
$$\frac{n(n-1)}{2}+1\leq M_\zeta(K_{2,\frac{(n-3)(n-2)}{2}\zeta+1};2)\leq 2(n-3)(n-2)+3.$$
\end{example}

We observe that the gap between the lower and upper bounds of Example \ref{Z2} is $O(n)$; however, the gaps between the lower and upper bounds in Example \ref{cor1} and Example \ref{cor2} are $O(n^2)$. Thus, we propose the following.

\begin{openproblem}
Find all strongly regular graphs such that the gap between lower and upper bounds in Theorem \ref{Q12} is $O(n)$.
\end{openproblem}




Exoo \textit{et al.} \cite{P4} constructed a strongly regular graph with parameters $(4n-3,2n-2,n-2,n-1)$ when $4n-3=p^t$ for a prime $p$ and a positive integer $t$. Additionally, the existence of a symmetric Hadamard matrix of order $4r^4$ for all odd $r$ has been proved by Muzychuk and Xiang \cite{P6}. Combining these facts with the last result of Example \ref{Z2}, we obtain  the following two examples.

\begin{example}
Let $p$ be a prime number and $t$ be a positive integer with $p^t\equiv 1\pmod 4$. If there exists a symmetric Hadamard
matrix of order $4k$ where $p^t\geq 4k+3$, then
$$m_{p^t}(K_{2,k(p^t-1)+1};2)=4k+1.$$
\end{example}

\begin{example}
Let $p$ be a prime number and $t$ be a positive integer with $p^t\equiv 1\pmod 4$. Let $r$ be an odd number where $p^t\geq 4r^4+3$. Then
$$m_{p^t}(K_{2,r^4(p^t-1)+1};2)=4r^4+1.$$

Furthermore, for $r=1$ and $ p^t\geq 7$, we obtain the following. 
$$ m_{p^t}(K_{2,p^t},K_{2,p^t})=5.$$
\end{example}

Notice that $n=p^t$ is always odd for any prime $p>2$. For even $n$, the smallest known value is $m_{n}(K_{2,n},K_{2,n})=5$, 
proved as the bipartite Ramsey number $b(K_{2,2};K_{2,2})$ by
Beineke and Schwenk \cite{beineke}. 
Although we only know the exact value for $m_{2}(K_{2,2},K_{2,2})$, we suspect that the same value also applies to larger values of even $n$. Thus, we propose the following.
\begin{conjecture}\label{x}
$m_{n}(K_{2,n},K_{2,n})=5$ for $n\geq 2$.
\end{conjecture}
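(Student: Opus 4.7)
The plan is to address the upper bound $m_n(K_{2,n},K_{2,n}) \leq 5$ and the lower bound $m_n(K_{2,n},K_{2,n}) \geq 5$ separately, since they require substantially different techniques.

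For the upper bound, I would use a convexity (K\H{o}v\'ari--S\'os--Tur\'an style) count of cherries in the majority color class. In any $2$-edge-coloring of $K_{n\times 5}$, one color---say red---carries at least $25\binom{n}{2}/2$ edges on $N = 5n$ vertices, giving average red degree at least $5(n-1)/2$. Thus $\sum_{v}\binom{d_R(v)}{2} \geq 5n\binom{5(n-1)/2}{2}$. If no red $K_{2,n}$ exists then each pair of vertices has at most $n-1$ common red neighbors, so the cherry count is at most $\binom{5n}{2}(n-1)$. Comparing these two bounds forces $n \leq 6$, so the upper bound holds for $n \geq 7$. The case $n = 2$ is Beineke--Schwenk \cite{beineke}; for $n \in \{3,4,5,6\}$ I would refine the count by exploiting the multipartite constraint that common neighbors of a same-part pair are drawn from only $n-1$ parts (so the relevant vertex set has $5(n-1)$ rather than $5n$ elements), or carry out a finite case analysis.

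For the lower bound, the approach is to apply Theorem \ref{Q12} with $\zeta = 4$ (using the symmetric Hadamard matrix of order $4$, so $\alpha = 0$) together with a conference graph on $n$ vertices. A conference graph with parameters $(n,(n-1)/2,(n-5)/4,(n-1)/4)$ yields $\theta = (n-1)/4$, so Theorem \ref{Q12} produces a $2$-coloring of $K_{n\times 4}$ avoiding monochromatic $K_{2,4\theta+1} = K_{2,n}$. This settles the lower bound whenever $n$ is a prime power congruent to $1 \pmod 4$ via the Paley graph. For other $n$, I would look for a symmetric $[\alpha]$-Hadamard matrix of order $4$ with $\alpha \geq 1$ (granted by Conjecture \ref{C11}) paired with a strongly regular graph on $n$ vertices whose $\theta$ satisfies $\theta(4+\alpha) \leq n-1$, or build the colorings directly from algebraic symmetries of the vertex labels on each part.

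The principal obstacle lies in the lower bound for $n$ outside the conference-graph family, and especially for every even $n$. Indeed, when $n$ is even, the identity $k + \bar{k} = n-1$ (odd) forces $\max(k,\bar{k}) \geq n/2$, so $\theta \geq n/4 > (n-1)/4$ on every strongly regular graph on $n$ vertices; consequently Theorem \ref{Q12} with $\zeta = 4,\alpha = 0$ cannot yield the needed $K_{2,n}$-free coloring, and since introducing $\alpha > 0$ only increases the forbidden size $\theta(\zeta+\alpha)+1$, no choice of $\alpha$ rescues the direct application either. Resolving these cases therefore likely requires either new families of strongly regular graphs specifically tuned to minimize $\theta$, or direct combinatorial constructions of the required colorings drawn from finite geometries, Latin squares, or design-theoretic objects, rather than the Theorem \ref{Q12} machinery. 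The upper bound for $n \in \{3,4,5,6\}$ is finite but still nontrivial, and may benefit from computer verification.
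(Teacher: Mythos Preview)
The statement you are attempting is labeled a \emph{conjecture} in the paper; the authors do \emph{not} prove it, so there is no proof in the paper to compare your proposal against. What the paper actually establishes is the equality $m_n(K_{2,n},K_{2,n})=5$ only for $n=2$ (quoting Beineke--Schwenk) and for $n=p^t\equiv 1\pmod 4$ a prime power with $p^t\geq 7$, via Example~6 combined with Theorem~\ref{Q12}; the remaining cases are explicitly left open, which is precisely why the statement is posed as Conjecture~\ref{x}.

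Your proposal is therefore not a proof either, but it accurately reconstructs the boundary of what the paper's machinery yields, and in one respect goes a little further. Your cherry-count upper bound is the same convexity argument underlying Theorem~\ref{newZ} (and Lemma~\ref{A3}), and your threshold $n\geq 7$ matches the paper's; however, by running the count directly you sidestep the divisibility hypothesis in Theorem~\ref{newZ}, so your upper bound $m_n(K_{2,n},K_{2,n})\leq 5$ holds for \emph{all} $n\geq 7$, whereas the stated form of Theorem~\ref{newZ} only covers odd $n$ there. Your lower-bound plan via Theorem~\ref{Q12} with $\zeta=4$, $\alpha=0$ and a conference graph on $n$ vertices is exactly the paper's route to the prime-power cases.

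Your diagnosis of the obstruction is also on target and coincides with the paper's own reason for stating a conjecture: for even $n$ one has $k+\overline{k}=n-1$ odd, forcing $\theta\geq\max\{k,\overline{k}\}/2\geq n/4>(n-1)/4$ for any strongly regular graph on $n$ vertices, so $4\theta+1>n$ and Theorem~\ref{Q12} with $\zeta=4$ can never forbid a monochromatic $K_{2,n}$; increasing $\alpha$ or $\zeta$ only makes $\theta(\zeta+\alpha)+1$ larger. Thus the genuine gap is not in your argument but in the problem itself: the lower bound for even $n$ (and for odd $n$ lacking a conference graph) is beyond the reach of Theorem~\ref{Q12} and remains open, as the paper acknowledges.
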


\section{Proof of Theorem \ref{A11}} \label{sec:exact}

We start this section by presenting Lemma \ref{A3}, which will be utilized to obtain the upper bound of Theorem \ref{A11}. 

\begin{lemma} \cite{P7} \label{A3}
Let $k,s,n_1,n_2,\ldots,n_k \in \mathbb{N}$ and $k\geq 2$. Suppose that $c$ is a positive integer such that
\begin{equation}\label{XX1}
    kcs \binom{\frac{(c-1)s}{k}}{2}> \sum_{i=1}^k (n_i-1)\binom{cs}{2},
\end{equation}
    then $M_s(K_{2,n_1}, K_{2,n_2},\ldots, K_{2,n_k})\leq c$.
\end{lemma}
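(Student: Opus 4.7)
The plan is to argue by contradiction via a standard double-counting argument applied to an arbitrary $k$-edge coloring of $K_{c\times s}$ that supposedly avoids a monochromatic copy of $K_{2,n_i}$ in color $i$ for every $i\in\{1,\ldots,k\}$. First I would record that $K_{c\times s}$ has $cs$ vertices, each of degree $(c-1)s$, and let $d_i(u)$ denote the number of color-$i$ edges incident to a vertex $u$, so that $\sum_{i=1}^{k} d_i(u)=(c-1)s$ at every $u$. The convexity of $\binom{x}{2}=x(x-1)/2$ together with Jensen's inequality then yields $\sum_{i=1}^{k}\binom{d_i(u)}{2}\geq k\binom{(c-1)s/k}{2}$ at each $u$; summing over the $cs$ vertices produces the global lower bound
\[
\sum_{i=1}^{k}\sum_{u}\binom{d_i(u)}{2}\;\geq\;kcs\binom{(c-1)s/k}{2}.
\]

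Next I would obtain a matching upper bound on the same quantity by reinterpreting $\sum_{u}\binom{d_i(u)}{2}$ as a count of color-$i$ cherries. Double-counting the incidences $(u,\{v,w\})$ for which both $uv$ and $uw$ carry color $i$ gives
\[
\sum_{u}\binom{d_i(u)}{2}\;=\;\sum_{\{v,w\}\subseteq V(K_{c\times s})}\bigl|N_i(v)\cap N_i(w)\bigr|,
\]
where $N_i(x)$ denotes the color-$i$ neighborhood of $x$. The avoidance hypothesis forces $|N_i(v)\cap N_i(w)|\leq n_i-1$ for every unordered pair $\{v,w\}$, because any pair sharing $n_i$ common color-$i$ neighbors would realize a $K_{2,n_i}$ in color $i$ with $\{v,w\}$ on the $2$-side; this is valid regardless of whether $v,w$ lie in the same partite class, since only the bipartite edges from $v$ and $w$ to the $n_i$ common neighbors are required. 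Summing this pointwise bound over all $\binom{cs}{2}$ pairs and then over the $k$ colors yields
\[
\sum_{i=1}^{k}\sum_{u}\binom{d_i(u)}{2}\;\leq\;\sum_{i=1}^{k}(n_i-1)\binom{cs}{2}.
\]

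Concatenating the two chains forces $kcs\binom{(c-1)s/k}{2}\leq\sum_{i=1}^{k}(n_i-1)\binom{cs}{2}$, in direct contradiction with the strict hypothesis~(\ref{XX1}); hence some color $i$ must contain a copy of $K_{2,n_i}$, which establishes $M_s(K_{2,n_1},\ldots,K_{2,n_k})\leq c$. The argument is essentially routine, and I do not anticipate a genuine obstacle. The only points that warrant care are that Jensen's inequality is applied to the real-variable polynomial $\binom{x}{2}=x(x-1)/2$ (so the lower bound remains valid when $(c-1)s/k$ is not an integer), and that the pair count honestly ranges over all $\binom{cs}{2}$ unordered pairs---including pairs that lie in a common partite class---since the avoidance bound $|N_i(v)\cap N_i(w)|\leq n_i-1$ holds uniformly in both situations.
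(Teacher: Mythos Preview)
Your argument is correct and is the standard cherry-counting proof of this lemma. Note, however, that the present paper does not actually supply a proof of Lemma~\ref{A3}: it is quoted verbatim from Perondi and Carmelo~\cite{P7} and used as a black box in the proof of Theorem~\ref{A11}. The original proof in~\cite{P7} proceeds exactly along the lines you outline---a convexity (Jensen) lower bound on $\sum_{i,u}\binom{d_i(u)}{2}$ combined with the upper bound coming from the absence of a monochromatic $K_{2,n_i}$---so your proposal matches the intended argument essentially verbatim. Your two caveats (that $\binom{x}{2}$ is treated as the real polynomial $x(x-1)/2$, and that the bound $|N_i(v)\cap N_i(w)|\le n_i-1$ applies even to pairs $v,w$ in a common partite class) are precisely the points one must check, and you have handled them correctly.
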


\textbf{Proof of Theorem \ref{A11}}.
The lower bound is directly obtained from Theorem \ref{Q12}. 
By Lemma \ref{A3}, to prove that $M_\zeta(K_{2,(\zeta+\alpha)(n-1)+1};2) \leq  4n-2$, we need to show that $s=\zeta$, $k=2$, $n_1=n_2=(\zeta+\alpha)(n-1)+1$, and $c=4n-2$  satisfy the inequality (\ref{XX1}) or equivalently, we need to show that 
$$\zeta^2c^2-(2\zeta+4((\zeta+\alpha)(n-1)+1)-2)\zeta c+(\zeta^2+2\zeta+4((\zeta+\alpha)(n-1)+1)-4)>0$$

Now, let $p=2\zeta+4((\zeta+\alpha)(n-1)+1)-2$, $q=\zeta^2+2\zeta+4((\zeta+\alpha)(n-1)+1)-4$, and $t=4\alpha( n-1)+2$. Note that
     $$p=2\zeta+4((\zeta+\alpha)(n-1)+1)-2
      =4\zeta n-2\zeta +4\alpha n+2-4\alpha,$$ and
    $$q=p+\zeta ^2-2.$$ 
Consider the real function $f(x)=\zeta ^2x^2-p\zeta x+q$; we need to show that $f(4n-2)>0$. Note that the discriminant of $f$ is positive, as shown below.
$$p^2-4q=p^2-4(p+\zeta ^2-2)=
(p-2)^2+4(\zeta ^2-3)>0. $$

Therefore, $f(x)$ has two different roots. Since $f$ is a convex quadratic function and the highest root of $f$ is $(p+\sqrt{p^2-4q})/(2\zeta) $, then we need to show that
$4n-2>(p+\sqrt{p^2-4q})/(2\zeta) $, that is equivalent to
$$2\zeta (4n-2)-p>\sqrt{p^2-4q},$$
$$8\zeta n-4\zeta -(4\zeta n-2\zeta +4\alpha n+2-4\alpha)> \sqrt{p^2-4q},$$
$$4\zeta n-2\zeta -4\alpha n-2+4\alpha > \sqrt{p^2-4q},$$
$$(4\zeta n-2\zeta +4\alpha n+2-4\alpha)-8\alpha n-4+8\alpha > \sqrt{p^2-4q},$$
\begin{equation}\label{ine1}
  p-(8\alpha n+4-8\alpha) > \sqrt{p^2-4q}.
\end{equation}
The hypothesis
$\zeta >(\sqrt{2}+1)(2n-1)(4\alpha n-4\alpha+1)$
implies that
$$-4\alpha n+4\alpha-2>-\frac{\zeta }{(\sqrt{2}+1)(2n-1)}-1.$$
Consequently,
\begin{align*}
4\zeta n-2\zeta -4\alpha n-2+4\alpha &>4\zeta n-2\zeta -\frac{\zeta }{(\sqrt{2}+1)(2n-1)}-1\\
&>4\zeta n-2\zeta -\zeta -1=\zeta (4n-3)-1>0.
\end{align*}
This means that both sides in (\ref{ine1}) can be squared. Hence, inequality (\ref{ine1}) is equivalent to
$$p^2-2p(8\alpha n+4-8\alpha)+(8\alpha n+4-8\alpha)^2 > p^2-4q,$$
$$p(4\alpha n+2-4\alpha)+(4\alpha n+2-4\alpha)^2 < q,$$
$$pt+t^2 < p+\zeta ^2-2,$$
$$(4\zeta n-2\zeta +t)t+t^2 < 4\zeta n-2\zeta +t+\zeta ^2-2,$$
$$\zeta ^2+4\zeta n-2\zeta -4\zeta nt+2\zeta t-2t^2+t-2>0,$$
$$\zeta ^2-(4nt-4n+2-2t)\zeta -2t^2+t-2>0,$$
$$\zeta ^2-(4n(t-1)-2(t-1))\zeta -2t^2+t-2>0,$$
\begin{equation}\label{XX2} \zeta ^2-(4n-2)(t-1)\zeta -(2t^2-t+2)>0.  
\end{equation}
Now, we will prove that inequalities (\ref{XX2}) is true by considering the real function 
$$g(x)=x^2-(4n-2)(t-1)x-(2t^2-t+2).$$
The highest root of $g(x)$ is
$$x_1=(2n-1)(t-1)+\sqrt{(2n-1)^2(t-1)^2+(2t^2-t+2)},$$ 
Again, since $g$ is a convex quadratic function and the highest root of $g$ is $x_1$, then we need to show that $\zeta>x_1$. By hypothesis 
\begin{align*}
\zeta &> (\sqrt{2}+1)(2n-1)(4\alpha n-4\alpha+1)\\
&=(2n-1)(t-1)+\sqrt{(2n-1)^2(t-1)^2+(2n-1)^2(t-1)^2},
\end{align*}
it is enough to show that $(2n-1)^2(t-1)^2>(2t^2-t+2)$. Since $n,t\geq 2$, this is sufficient by proving that $9(t-1)^2>(2t^2-t+2)$ for every $t\geq 2$. But, this is easy due to the fact that 
$h(t)=9(t-1)^2-(2t^2-t+2)=7t^2-17t+7$ is positive
for every $t\geq 2$. 
We conclude that $(2n-1)^2(t-1)^2>(2t^2-t+2)$
for every $n,t\geq 2$. Hence, $\zeta>(\sqrt{2}+1)(2n-1)(t-1)>x_1$.

Since $g$ is a convex quadratic function with highest root $x_1$, where $\zeta>x_1$, then (\ref{XX2}) is true. This completes the proof. $\square$

The rest of the section will be dedicated to applying Theorems \ref{A11} in obtaining set multipartite Ramsey numbers for particular complete multipartite graphs. Combining Theorem \ref{A11} and Theorem \ref{H1}, we obtain the following examples.

\begin{example}\label{4n-22A}
Suppose that there exist a strongly regular graph with parameters $(4n-3, 2n-2, n-2, n-1)$ and a symmetric Hadamard matrix of order $\zeta$, where $\zeta-\alpha$ is even, $\alpha\leq \zeta /2$, and
$\zeta -\alpha> (\sqrt{2}+1)(2n-1)(4\alpha n-4\alpha+1)$. Then
$$M_{\zeta -\alpha}(K_{2,\zeta (n-1)+1};2) =  4n-2.$$
\end{example}

Utilizing Exoo's strongly regular graph with parameters $(4n-3,2n-2,n-2,n-1)$, where $4n-3=p^t$ for a prime $p$ and a positive integer $t$, and Muzychuk and Xiang's symmetric Hadamard matrix of order $4r^4$ for all odd $r$, Example \ref{4n-22A} leads to the following.

\begin{example}\label{F12}
Let $p$ be a prime number and $t$ be a positive integer with $p^t\equiv 1\pmod 4$. Let $r$ be odd number with $4r^4-\alpha$ is even, $\alpha\leq 2r^4$,
$4r^4-\alpha> (\sqrt{2}+1)((p^t+3)/2-1)(\alpha (p^t+3)-4\alpha+1)$.  Then
$$M_{4r^4-\alpha}(K_{2,r^4(p^t-1)+1};2)=p^t+1.$$    
\end{example}

\section{Open Problems}\label{OPD}

In this paper, we present some exact values of multipartite Ramsey numbers by utilizing strongly regular graphs of parameters $(4n-3, 2n-2, n-2, n-1)$ in Theorem \ref{Q12}. It is easy to check that a strongly regular graph $G$ and its complement $\overline{G}$ have the same parameters if and only if the parameters of $G$ is $(4n-3, 2n-2, n-2, n-1)$. In this case, the value of $\theta$ is minimized, that is, $\theta=n-1=k/2=(\lvert G\rvert -1)/4$. Since the upper bound in Theorem \ref{Q12} is a function of $\theta$, a better upper bound is achieved when the $\theta$s are small relative to $\lvert G\rvert $. 

In the case of a strongly regular graph $G$ with parameters $(4n-3, 2n-2, n-2, n-1)$, we have 
    \[\lim_{\lvert G\rvert \to \infty} \frac{\theta}{\lvert G\rvert }=\frac{1}{4}.\]
However, the limits for the other two strongly regular graphs considered in Section \ref{sec:Ramsey}, that is $L(K_{n,n})$ and $L(K_n)$, are greater than $\frac{1}{4}$. These observations lead us to the following interesting question. We suspect the answer to the question would be $\frac{1}{4}$, but we have no further evidence to form a conjecture.

\begin{openproblem}\label{CvZ}
Let $\mathcal{G}$ be the set of all strongly regular graphs with parameters $(n,k,\lambda,\mu)$ and $\theta=\max\{k/2,\lambda,\mu,(n-k-1)/2, n-2-2k+\mu,n-2k+\lambda\}$, find the value of
$$\lim_{n\to\infty} \inf \left\{  \frac{\theta}{n} \bigl\rvert\, G\in\mathcal{G} \right\}.$$
\end{openproblem}

Another interesting question is whether we could find other strongly regular graphs that can be utilized to obtain exact values of multipartite Ramsey numbers.

\begin{openproblem}
Is $(4n-3, 2n-2, n-2, n-1)$ the only parameters for a strongly regular graph such that the lower and upper bounds in Theorem \ref{Q12} coincide?
\end{openproblem}

\section*{Statements and Declarations}


\begin{itemize}
\item \textbf{Funding} I.W.P. Anuwiksa is supported by Ganesa Talent Assistantship - Research Group (GTA-RG) Scholarship. R. Simanjuntak and E.T. Baskoro are supported by Penelitian Multidisiplin Lintas KK, PPMI-FMIPA, 2021-2022.

\item \textbf{Competing interests} The authors have no relevant financial or non-financial interests to disclose.
\item \textbf{Availability of data}
Our manuscript has no associated data.
\item \textbf{Author contributions} All authors contributed to the study conception and design. Material preparation, data collection and analysis were performed by P. Anuwiksa, R. Simanjuntak, and E.T. Baskoro. The first draft of the manuscript was written by P. Anuwiksa and all authors commented on previous versions of the manuscript. All authors read and approved the final manuscript.
\end{itemize}








\end{document}